\theoremstyle{plain}
\newtheorem{theorem}{Theorem}[section]
\newtheorem{prop}[theorem]{Proposition}
\newtheorem{lemma}[theorem]{Lemma}
\newtheorem{corollary}[theorem]{Corollary}
\theoremstyle{definition}
\newtheorem{remark}[theorem]{Remark}
\def\R{{\mathbb R}}
\def\N{{\mathbb N}}
\def\P{{\mathbb P}}
\def\D{{\mathcal{D}}}
\def\eps{\varepsilon}
\def\OU{\mathcal{L}}
\def\L{\mathscr{L}}
\def\B{\mathrm{B}}
\def\BB{\mathbb{B}}
\def\Id{\mathrm{Id}}
\def\d{\mathrm{d}}
\def\p{\mathrm p}
\def\div{\mathrm{div\,}}
\def\q{\mathrm{q}}
\numberwithin{equation}{section}
\title[The Oseen-Navier-Stokes flow]{\textbf{The Oseen-Navier-Stokes flow in the exterior of a rotating obstacle: The non-autonomous case}}
\author{Tobias Hansel}
\address{ Technische Universit\"at Darmstadt\\Department of Mathematics\\ 64289 Darmstadt,
Germany} \email{hansel@mathematik.tu-darmstadt.de}
\author{Abdelaziz Rhandi}
\address{Dipartimento di Ingegneria dell'Informazione e Matematica Applicata \\ Universit\`a degli Studi di Salerno\\ Via Ponte Don Melillo, \\ 84084 Fisciano
(Sa)\\Italy}
\email{rhandi@diima.unisa.it}
\keywords{Navier-Stokes flow, Oseen-flow, rotating obstacle, exterior domain, non-autonomous PDE, evolution system, Ornstein-Uhlenbeck operator} \subjclass[2000]{Primary 35Q30; Secondary  76D03, 76D05}
\begin{document}

%%%%%%%%%%%%%%%%%%%%%%%%%%%%%%%%%%%%%%%%%%%%%%%%%%%%%%%%%%%%%%%%%%%%%%%%%%%%%%%
%
%   Startangaben
%   title
%
%%%%%%%%%%%%%%%%%%%%%%%%%%%%%%%%%%%%%%%%%%%%%%%%%%%%%%%%%%%%%%%%%%%%%%%%%%%%%%%

\maketitle %\thispagestyle{fancy}
%\tableofcontents
%\today

%%%%%%%%%%%%%%%%%%%%%%%%%%%%%%%%%%%%%%%%%%%%%%%%%%%%%%%%%%%%%%%%%%%%%%%%%%%%%%%
%
%   Hauptteil
%
%%%%%%%%%%%%%%%%%%%%%%%%%%%%%%%%%%%%%%%%%%%%%%%%%%%%%%%%%%%%%%%%%%%%%%%%%%%%%%%
%
\begin{abstract}
Consider the Navier-Stokes flow past a rotating obstacle with
a general time-dependent angular velocity and a time-dependent
outflow condition at infinity -- sometimes called an Oseen condition. By a suitable change
of coordinates the problem is transformed to an non-autonomous problem with unbounded drift terms
on a fixed exterior domain $\Omega\subset \R^d$. It is shown that the solution to the linearized problem is governed by a strongly
continuous evolution system $\{T_\Omega(t,s)\}_{t\geq s\geq0}$ on $L^p_\sigma(\Omega)$ for $1<p<\infty$. Moreover, $L^p$-$L^q$
smoothing properties and gradient estimates of $T_\Omega(t,s)$, $0\leq s \leq t$, are obtained. These results are the key ingredients to
show local in time existence of mild solutions to the full nonlinear problem for $p\geq d$ and initial value in $L^p_\sigma(\Omega)$.
\end{abstract}

\section{Introduction}
In this paper we consider the flow of an incompressible, viscous fluid
in the exterior of a rotating obstacle subject to an additional time-dependent outflow condition at
infinity. Here the angular velocity of the obstacle and the outflow condition at infinity may depend on time
and also the axis of rotation may change.
The equations describing this problem are the Navier-Stokes
equations in a time-dependent exterior domain with a prescribed velocity field at infinity.

After rewriting the problem on a fixed exterior domain $\Omega \subset \R^d$, we obtain an non-autonomous system of equations involving a family of time-dependent operators of the form
\begin{equation}\label{eq:Op}
A(t)u= \P_\Omega\left( \Delta u + (M(t)x + c(t))\cdot \nabla u - M(t) u\right), \qquad t\ge 0.
\end{equation}
where $\P_\Omega$ denotes the Helmholtz projection from $L^p(\Omega)^d$ onto the solenoidal space $L^p_\sigma(\Omega)$ and $M\in C^1([0,\infty);\R^{d\times d})$, $c\in C^1([0,\infty);\R^d)$.  The  main difficulty in dealing with these operators arises since the
term $M(t)x \cdot \nabla$ has unbounded coefficients in the exterior domain $\Omega$. In particular, the
lower order terms cannot be treated by classical perturbation theory for the
Stokes operator. In the autonomous case
$M(t)\equiv M$ and $c(t)\equiv 0$ such an operator was first considered by Hishida \cite{Hishida:1999} and then later by Geissert, Heck, Hieber \cite{Geissert/Heck/Hieber:2006a}.
%and they showed that this operator generates a semigroup on $L^p_\sigma(\Omega)$, $1<p<\infty$, which is not analytic. However, they were able to show certain smoothing
%properties of the semigroup and they obtained a local mild solution to the full nonlinear problem by some fixed point argument.
It is the aim of this paper to extend their result to the non-autonomous case.

In the following let us briefly motivate our problem and let us show why it is interesting to study the non-autonomous case. For this purpose let $\mathcal O \subset \R^d$ be a compact obstacle with
smooth boundary and let $\Omega:=\R^d \setminus \mathcal O$ be the
exterior of the obstacle. Furthermore, let $m \in C^1([0,\infty); \R^{d\times d})$ be a
matrix-valued function that describes the velocity of the obstacle.
Then, the exterior of the rotated obstacle at time $t\ge 0$ is represented by $\Omega(t):=
Q(t)\Omega$ where $Q\in C^1([0,\infty),\R^{d\times d})$ solves the ordinary differential equation
\begin{equation}
\left\{\begin{array}{rcll}
\partial_t Q(t) &=& m(t)Q(t),& t\ge0,\\[0.2cm]
Q(0)&=&\mathrm{Id}.
\end{array}\right.
\end{equation}\normalsize
With a prescribed velocity field $v_\infty\in C^1([0,\infty);\R^d)$
at infinity, the equations for the fluid on the time-dependent
domain $\Omega(t)$ with no-slip boundary condition
take the form
\begin{align}\label{eq:NS_2}
v_t-\Delta v +v\cdot \nabla v+\nabla \q&=0&\quad\quad\quad\mbox{in $(0,\infty)\times \Omega(t)$,}\notag\\
\div v&=0&\quad\quad\quad\mbox{in $(0,\infty)\times \Omega(t)$,}\notag\\
v(t,y)&=m(t)y&\quad\quad\quad\mbox{ on $(0,\infty)\times \partial\Omega(t)$,}\\
\lim_{|y|\to \infty} v(t,y)&=v_\infty(t)&\quad\quad\quad\mbox{ \mbox{for} $t\in(0,\infty) $,}\notag\\
v(0,y)&=v_0(y)&\quad\quad\quad\mbox{in $\Omega$}\notag.
\end{align}
Here $v=v(y,t)$ and $\q=\q(y,t)$ are the unknown velocity field and the pressure of
the fluid, respectively.

%The disadvantage of this description is the
%variability of the domain $\Omega(t)$, and the fact that the
%equations do not fit into the $L^p$-setting, due the velocity
%condition at infinity. Since $m(\cdot)$ is the velocity of the rotated obstacle, it is natural to assume that $m(t)$ is skew
%symmetric for $t>0$; this implies that for all $t>0$ the matrix $Q(t)$ is orthogonal.
%
As usual, it is reasonable to reduce \eqref{eq:NS_2} to a new problem on a fixed exterior domain by some suitable coordinate transformation. Since $m(\cdot)$ is the velocity of the rotated obstacle, it is natural to assume that $m(t)$ is skew
symmetric for all $t\ge0$. This implies that for all $t\ge 0$ the matrix $Q(t)$ is orthogonal. Thus, we can set\vspace{-0.2cm}
\begin{equation}
x=Q(t)^{\mathrm T}y, \quad u(t,x)=Q(t)^{\mathrm T}
(v(t,y)-v_\infty(t)), \quad \p(t,x)=\q(t,y).
\end{equation}
Then we obtain the following new system of equations on the reference domain $\Omega$:
%\small
\begin{align}\label{eq:NS_3}
	\left.
	\begin{array}{l}
u_t- \Delta u - M(t)x \cdot \nabla u + M(t)u  \\
\quad+ Q(t)^{\mathrm T}v_{\infty}(t)\cdot \nabla u
+Q(t)^{\mathrm T}\partial_t v_\infty(t)\\
\quad +u\cdot \nabla u
+\nabla \p
\end{array}\right\}&=0&\;\mbox{in   $(0,\infty)\times \Omega $,}\notag\\
\div u&=0&\quad\mbox{in $ (0,\infty)\times \Omega $,}\notag\\
u(t,x)&= M(t)x-Q(t)^{\mathrm T}v_\infty(t)&\;\mbox{on $(0,\infty)\times \partial\Omega $},\\
\lim_{|x|\to \infty} u(t,x)&= 0&\;\mbox{\mbox{for} $t\in(0,\infty) $,}\notag\\
u(0,x)&=u_0(x)&\;\mbox{in $\Omega$}.\notag
\end{align}%\vspace{0.3cm}
\normalsize
Here $M(t):=Q(t)^{\mathrm T}m(t)Q(t)$ is the transformed velocity of the obstacle. The coordinate transformation also ensures that the new velocity field $u$ vanishes at infinity, which is a natural condition in the $L^p$-setting.

Note that a problem of this type also arises in the analysis of a rotating body with a translational velocity $-v_\infty(t)$ by a similar coordinate transformation, see e.g. the explanations in \cite{Farwig:2005}.

%The  main difficulty in dealing with this problem arises since the
%term $M(t)x \cdot \nabla$ has unbounded coefficients. In particular, the
%lower order terms cannot be treated by classical perturbation theory for the
%Stokes operator.
%
Problem \eqref{eq:NS_2} was studied intensively for the special case of \emph{time-independent} matrices $M(t)\equiv M$ and
{\em without} an outflow condition, i.e.  $v_\infty\equiv0$. Hishida \cite{Hishida:1999} showed that the solution to the linearized problem is governed by a strongly continuous semigroup on $L^2$, which is however \emph{not analytic}. Moreover, he constructed local mild solutions
in $L^2$ by using the Fujita-Kato approach (cf. \cite{Fujita/Kato:1964}). Later this generation and existence result was extended to the general $L^p$-theory by Geissert, Heck,
Hieber \cite{Geissert/Heck/Hieber:2006a}. Hishida and Shibata \cite{Hishida/Shibata:2009} were even able to show global extistence for small data. The model problem in $\R^d$ was studied by Hieber and Sawada \cite{Hieber/
Sawada:2005} in the $L^p$-setting.

The case of \emph{time-dependent} angular velocities was considered by Hishida \cite{Hishida:2001} in the $L^2$-context, however he assumes that the axis of rotation is fixed and that the sign of his angular velocity does not change.

For the problem including an additional outflow condition at infinity,
there are only a few results.
The case, where $M(t)x=\omega(t) \times x$ and
$\omega:[0,\infty) \rightarrow \R^3$ is the angular velocity of the
obstacle and $v_\infty:[0,\infty)\to\R^3$ a time-dependent outflow velocity was considered by
Borchers \cite{Borcher:1992} in the framework of weak solutions. This work was somehow the starting point in the analysis of viscous fluid flow past a rotating obstacle. More recently,
Shibata \cite{Shibata:2008} studied
the special case where $M(t)\equiv M$, $v_\infty(t)\equiv v_\infty$ and
$Mv_\infty=0$. The additional condition $Mv_\infty=0$,  i.e.
$Q(t)^{\mathrm T}v_\infty= kv_\infty$ for $k\in \{+1,-1\}$, ensures that
\eqref{eq:NS_3} is still an autonomous equation.
The physical meaning of the additional condition is that the outflow
direction of the fluid is parallel to the axis of rotation of the
obstacle. The stationary problem of this latter situation was
analysed by Farwig \cite{Farwig:2005} for the whole space case $\R^3$.

In order to relax the assumption $Mv_\infty=0$, i.e. in order to allow a general  $v_\infty$, it is necessary to study a non-autonomous problem. Thus, in this context it is natural even to allow  time-dependent outflow velocities $v_\infty(\cdot)$ and time-dependent angular velocities $M(\cdot)$. The non-autonomous model problem of \eqref{eq:NS_3} in the case $\Omega=\R^d$ was recently studied by the first author \cite{Hansel:2009} and by Geissert and the first author \cite{Geissert/Hansel:2010}. Indeed, they were able to show that the family of operators in \eqref{eq:Op}, equipped with suitable domains, generate a strongly continuous evolution system on $L^p_\sigma(\R^d)$, $1<p<\infty$, which enjoys nice regularity properties. Their approach is based on an explicit solution formula for the linearized problem. By a version of Kato's iteration scheme (cf. \cite{Kato:1984,Giga:1986}) one obtains a (local) mild solution to the nonlinear problem on $\R^d$ for initial value $v_0 \in L^p_\sigma (\R^d)$, $d\leq p < \infty$. In this paper we use their results for the linearized problem to cover the physically more realistic situation of exterior domains by some cut-off techniques.
\subsection*{Notations}
The euclidian norm of $x\in \R^d$ will be denoted by $|x|$. By $B(R)$ we denote the open ball in $\R^d$ with centre
at the origin and radius $R$. For $T>0$ we use the notations:
\begin{eqnarray*}
\Lambda_T &:=& \{(t,s): 0\le s\le t\le T\},\\
\widetilde{\Lambda}_T &:=&  \{(t,s): 0\le s< t\le T\},\\
\Lambda &:=& \{(t,s): 0\le s\le t \},\\
\widetilde{\Lambda} &:=&  \{(t,s): 0\le s< t \}.
\end{eqnarray*}
Let us come to notation for function spaces. For a $C^{1,1}$ domain $\Omega\subset \R^d$ and $1\le p< \infty,\,j\in \N$, $W^{j,p}(\Omega)$ denotes the
classical Sobolev space of all $L^p(\Omega)$--functions having weak derivatives in
$L^p(\Omega)$ up to the order $j$. Its usual norm is denoted by $\|\cdot
\|_{j,p,\Omega}$ and by $\|\cdot \|_{p,\Omega}$ when $j=0$. If $\Omega =\R^d$ we drop $\Omega$ in the notations of the above norms.
We will use also the notation
$$\langle f,g\rangle_\Omega :=\int_\Omega fg\,dx,\quad f\in L^p(\Omega),\,g\in L^{p'}(\Omega)$$ with $\frac{1}{p}+\frac{1}{p'}=1$.
By $W_0^{s,p}(\Omega)$, $s\geq 0$, we denote the closure of the space of test functions $C_c^\infty(\Omega)$ with respect to the norm of $W^{s,p}(\Omega)$. For $s<0$ we set
\begin{equation*}
W^{s,p}_0(\Omega) := \left( W^{-s,p'}(\Omega) \right)',
\end{equation*}
where $\frac{1}{p}+\frac{1}{p'}=1$. We denote by $H^{s,p}(\Omega)$ with $s\in(0,2)$ the Bessel potential spaces, which are defined by complex interpolation
$$
H^{s,p}(\Omega):=[L^p(\Omega), W^{2,p}(\Omega)]_{\frac s 2}.
$$
Its norm will be denoted by $\| \cdot \|_{s,p,\Omega}$.
Moreover, we set
\begin{align*}
C_{c,\sigma}^{\infty}(\Omega)&:= \{f\in C_c^{\infty}(\Omega)^d: \; \div f = 0\},\\
L^p(\Omega)^d&:= \{f=(f_1,\ldots,f_d) : f_i \in L^p(\Omega), \, i=1,\ldots,d\}, \quad L_\sigma^p(\Omega):= \overline{C_{c,\sigma}^{\infty}(\Omega)}^{L^p(\Omega)^d},\\
G^p(\Omega)&:= \{\nabla \p : \,\p \in \hat{W}^{1,p}(\Omega)\}, \quad \hat{W}^{1,p}(\Omega):=\{\p \in L^p_{loc}(\overline{\Omega}) : \, \nabla \p \in L^p(\Omega)^d\}.
\end{align*}
It is well-known that for a Lipschitz domain $\Omega\subset \R^d$ with compact boundary the Helmholtz decomposition holds (see e.g. \cite{Galdi:1994} for more information):
\begin{equation*}
L^p(\Omega)^d=L^p_\sigma(\Omega) \,\oplus\,G^p(\Omega).
\end{equation*}
The projection from $L^p(\Omega)^d$ onto $L^p_\sigma(\Omega)$ is denoted by $\P_\Omega$.
%For $0<\alpha < 1$ we denote by $C^\alpha_{loc}(\R_+, \R^{d\times d})$ the space of all $\alpha$-H\"older continuous functions in $[0,T]$ for all $T>0$.
%
%
\section{Main results and strategy of proofs}

In this section we present the main results and sketch the basic strategy of the proofs. In the following $\mathcal O \subset \R^d$ is always a compact obstacle with $C^{1,1}$-boundary and $\Omega:=\R^d\setminus \mathcal O$ is an exterior domain. Moreover, $M \in C^1([0,\infty),\R^{d\times d})$ and $v_\infty \in C^1([0,\infty),\R^d)$ are as described in the Introduction. In particular recall that $\mathrm{tr} \;M(t)=0$ for all $t\geq0$. To simplify our notation, we set $c(t):= - Q(t)^{\mathrm T}v_{\infty}(t)$. Since the term $Q(t)\partial_t v_\infty(t)$ in equation \eqref{eq:NS_3} is constant in space, we may put this term in the pressure $\p$. Thus, in the following we consider the system
\begin{align}\label{eq:NS_4}
\left.\begin{array}{l}
u_t- \Delta u - (M(t)x+c(t)) \cdot \nabla u + M(t)u \\
\quad +u\cdot \nabla u +\nabla \p
\end{array}\right\}&=0&\;\mbox{in   $(0,\infty)\times \Omega $,}\notag\\
\div u&=0&\quad\mbox{in $ (0,\infty)\times \Omega $,}\notag\\
u(t,x)&= M(t)x+c(t)&\;\mbox{on $(0,\infty)\times \partial\Omega $},\\
\lim_{|x|\to \infty} u(t,x)&= 0&\;\mbox{\mbox{for} $t\in(0,\infty) $,}\notag\\
u(0,x)&=u_0(x)&\;\mbox{in $\Omega$},\notag
\end{align}%\vspace{0.3cm}
\normalsize
where $\div u_0 = 0$. Moreover, we assume that the initial value $u_0$ satisfies the
compatibility assumption $u_0 \cdot \nu = (M(0)x+c(0)) \cdot \nu$ on $\partial \Omega$, where $\nu$ denotes the outer normal vector.

As a first step we construct a solenoidal extension in $\Omega$ of the boundary velocity $u(t,x)\vert_{\partial \Omega}$. For this
purpose we introduce the Bogovskii operator, which
concerns the solution of the equation $\div u = f$ in appropriate function spaces. This operator will also be needed later in Section
\ref{sect:exterior} to keep the solenoidal condition in our cut-off procedure. For proofs and more information on the Bogovskii operator we refer to \cite{Bogovskii:1979, Geissert/Heck/Hieber:2006b} and to the monograph \cite{Galdi:1994}.
\begin{lemma}\label{prop_Bog1}
Let $D \subset \R^d$, $d\geq 2$, be a bounded Lipschitz domain, $1<p<\infty$ and $k\in \N_0$.
\begin{itemize}
\item[(a)] There exists a continuous operator
\begin{equation*}
\BB_D : W^{k,p}_0(D)\to (W^{k+1,p}_0(D))^d
\end{equation*}
such that
\begin{equation*}
\div \BB_D f = f
\end{equation*}
for all $f\in W^{k,p}_0(D)$ satisfying $\int_D f \,\d x= 0$.
\item[(b)] For $k>-2+\frac 1 p$, the above operator $\BB_D$ can be continuously extended to a bounded operator from $W^{k,p}_0(D)$ to $(W^{k+1,p}_0(D))^d$.
\end{itemize}
\end{lemma}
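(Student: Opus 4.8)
The plan is to carry out Bogovskii's classical construction and then estimate the resulting operator on the Sobolev scale; see \cite{Bogovskii:1979, Galdi:1994} for the integer case and \cite{Geissert/Heck/Hieber:2006b} for the sharp statement including negative smoothness. The first step is localization. Cover $\overline{D}$ by finitely many balls $U_1,\dots,U_N$ such that each $D_j := D\cap U_j$ is star-shaped with respect to an open ball $B_j \Subset D_j$ and such that consecutive pieces overlap in a ball; pick a subordinate partition of unity $\{\varphi_j\}$ and functions $\omega_j \in C_c^\infty(B_j)$ with $\int \omega_j\,\d x = 1$. For $f$ with $\int_D f\,\d x = 0$ the pieces $\varphi_j f$ need not have vanishing integral, so one transports the excess mass along the chain of overlaps, i.e.\ replaces $\varphi_j f$ by $f_j := \varphi_j f - c_j\omega_j + c_{j-1}\omega_{j-1}$ with constants $c_j$ chosen recursively so that $\int_{D_j} f_j\,\d x = 0$; then $f=\sum_j f_j$ with $\operatorname{supp} f_j \subset D_j$ and $\|f_j\|_{k,p,D_j}\le C\|f\|_{k,p,D}$ for $k\in\N_0$, and the same in the negative-order norms. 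Setting $\BB_D f := \sum_j E_j\,\BB_{D_j}f_j$, where $E_j$ denotes extension by zero from $D_j$ to $D$, reduces the whole statement to a single star-shaped domain.

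On such a $D_j$ one uses the explicit formula
\begin{equation*}
\BB_{D_j}g(x) := \int_{D_j} g(y)\,\frac{x-y}{|x-y|^d}\left(\int_{|x-y|}^{\infty}\omega_j\!\left(y+r\,\frac{x-y}{|x-y|}\right)r^{d-1}\,\d r\right)\d y .
\end{equation*}
Integration by parts in $y$ gives $\div\BB_{D_j}g = g$ whenever $\int_{D_j}g\,\d x = 0$, and the cut-off $\omega_j$ forces $\BB_{D_j}g$ to vanish near $\partial D_j$ when $\operatorname{supp} g \Subset D_j$, so $\BB_{D_j}g \in W^{k+1,p}_0(D_j)^d$; patched together this yields $\div\BB_D f = f$ and $\BB_D f\in W^{k+1,p}_0(D)^d$. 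For the bound one writes the kernel as $N(x,x-y)=N_0(x,x-y)+N_1(x,x-y)$, where $z\mapsto N_0(x,z)$ is positively homogeneous of degree $1-d$, smooth for $z\ne0$, with vanishing mean over $\{|z|=1\}$, while $N_1$ is only weakly singular; hence each $\partial_{x_i}\BB_{D_j}$ is a Calder\'{o}n--Zygmund singular integral operator with smooth coefficients plus an operator with locally integrable kernel, both bounded on $L^p(\R^d)$ for $1<p<\infty$. Passing the remaining derivatives through operators of the same type and moving $y$-derivatives onto $g$ one obtains $\|\BB_{D_j}g\|_{k+1,p,D_j}\le C\|g\|_{k,p,D_j}$ for every $k\in\N_0$, which together with the localization proves (a).

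For (b) one must extend these estimates to negative smoothness, and duality is the natural device. For $\psi\in C_c^\infty(D)^d$ one has $\langle \BB_D f,\psi\rangle_D = \langle f,\BB_D^{\mathrm T}\psi\rangle_D$, and $\BB_D^{\mathrm T}$, after the same localization, is again the sum of a Calder\'{o}n--Zygmund operator of order $-1$ and a smoothing remainder; in other words $\BB_D$ and $\BB_D^{\mathrm T}$ are, modulo smoothing, properly supported operators of order $-1$, which map $W^{s,p}_0(D)\to W^{s+1,p}_0(D)$ boundedly for all $s$ for which neither the domain nor the target index hits the exceptional values where the zero-trace space and the distributional Sobolev space disagree. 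Using the identifications $W^{k,p}_0(D)=(W^{-k,p'}(D))'$ and the analogous one for the target, the relevant constraint is $-k-1<1-\tfrac1p=\tfrac1{p'}$, that is exactly $k>-2+\tfrac1p$; for such $k$ one concludes from the order $-1$ mapping property that $\BB_D^{\mathrm T}:W^{-k-1,p'}(D)^d\to W^{-k,p'}(D)$ is bounded, and transposing back yields the asserted extension of $\BB_D$.

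The main obstacle is clearly step (b): one has to recognize $\BB_D$ (and its transpose) precisely as an order $-1$ Calder\'{o}n--Zygmund operator with the correct kernel structure, so that the $L^p$ singular-integral machinery can be bootstrapped along the full Sobolev scale, and then track the admissible exponents carefully --- it is this bookkeeping that produces the restriction $k>-2+\tfrac1p$. The remaining ingredients of (a), namely the explicit formula, the divergence identity via integration by parts, the vanishing near $\partial D$, and the $L^p$--estimates for $k\in\N_0$, are by now standard.
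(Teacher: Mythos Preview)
The paper does not actually give a proof of this lemma; it only states the result and refers to \cite{Bogovskii:1979, Geissert/Heck/Hieber:2006b, Galdi:1994} for proofs and further information. Your sketch follows precisely the route of those references --- Bogovski\u{\i}'s explicit formula on star-shaped pieces with Calder\'on--Zygmund estimates for part (a), and the identification of $\BB_D$ as an operator of order $-1$ together with duality for the extension to negative smoothness in part (b), which is the content of \cite{Geissert/Heck/Hieber:2006b}. So your proposal is consistent with what the paper relies on, and the overall strategy is correct; the only remark is that in (b) the precise justification of the constraint $k>-2+\tfrac1p$ is more delicate than your one-line bookkeeping suggests (in \cite{Geissert/Heck/Hieber:2006b} it comes from the symbol-class analysis and the characterization of $W^{s,p}_0(D)$ via extension by zero, which requires $s+1>-1+\tfrac1p$ on the target side), but the idea you give points in the right direction.
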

Let $\zeta \in C_c^\infty(\R^d)$ be a cut-off function with $0\leq \zeta \leq 1$ and $\zeta = 1$ near $\partial \Omega$. Moreover we set $K:= \mathrm{supp} \, \nabla \zeta$ and define $b:[0,\infty)\times \R^d\rightarrow \R^d$ by
\begin{equation*}
b(t,x):=\zeta(x)(M(t)x + c(t)) - \BB_K((\nabla \zeta)\cdot (M(t)x+c(t))),
\end{equation*}
where $\BB_K$ is the operator from Lemma \ref{prop_Bog1} associated to the bounded domain $K$. Then $\div b(t,x)=0$ and
$b(t,x)=M(t)x + c(t)$ on $\partial \Omega$ for every $t\ge0$.

If we set $\tilde u = u - b$, then $u$ satisfies \eqref{eq:NS_4} if and only if $\tilde u$ satisfies
\begin{align}\label{eq:NS_5}
	\left.
	\begin{array}{l}
u_t- \Delta u - (M(t)x+c(t)) \cdot \nabla u + M(t)u \\
\quad + b(t,x)\cdot \nabla u + u\cdot \nabla b(t,x) +u\cdot \nabla u
+\nabla \p
\end{array}\right\}&=F_1(t,x)&\;\mbox{in   $(0,\infty)\times \Omega $,}\notag\\
\div u&=0&\quad\mbox{in $ (0,\infty)\times \Omega $,}\notag\\
u(t,x)&= 0&\;\mbox{on $(0,\infty)\times \partial\Omega $},\\
\lim_{|x|\to \infty} u(t,x)&= 0&\;\mbox{\mbox{for} $t\in(0,\infty) $,}\notag\\
u(0,x)&= f&\;\mbox{in $\Omega$},\notag
\end{align}%\vspace{0.3cm}
%with $\div(u_0 - b) = 0$ in $\Omega$ and
\normalsize
where $f(x):= u_0(x)-b(0,x)$ and
\begin{equation}\label{eq:inhom}
F_1(t,x):= \Delta b(t,x) + (M(t)x + c(t))\cdot \nabla b(t,x) - M(t) b(t,x) - b(t,x) \cdot \nabla b(t,x)-b_t(t,x).
\end{equation}
Note that $\div f = 0$ and that the compatibility assumption ensures that even $f\in L^p_\sigma (\Omega)$.

Our approach to system \eqref{eq:NS_5} is based on linear operators of the form
\begin{equation}
\OU (t)u(x) = \Big( \Delta u_i(x) + \langle M(t)x + c(t) , \nabla u_i(x) \rangle \Big)_{i=1}^{d} - M(t)u(x), \quad t\ge0,
\end{equation}
and perturbations of the form
\begin{equation*}
\mathcal B(t)u(x) = -b(t,x)\cdot \nabla u - u\cdot \nabla b(t,x), \quad t\ge0,
\end{equation*}
where $u=(u_1,\ldots,u_d)$ and $x$ is an element from $\Omega$, a bounded domain $D\subset \R^d$ or $\R^d$.  Note that the operators $\OU(\cdot)$ are of Ornstein-Uhlenbeck type (cf. \cite{Hansel/Rhandi:2010}). In the case of exterior domains $\Omega$ we define the $L^p$-realizations of $\OU(\cdot)$ as
\begin{equation}
\begin{array}{rcl}
\D(L_{\Omega}(t))&:=&\{u \in W^{2,p}(\Omega)^d \cap W^{1,p}_0(\Omega)^d: M(t)x \cdot \nabla u \in L^p(\Omega)^d\},\\[0.15cm]
L_{\Omega}(t)u & := & \OU(t)u,
\end{array}
\end{equation}
and the perturbed operators are defined by
\begin{equation}
\begin{array}{rcl}
\D(L_{\Omega,b}(t))&:=&\D(L_\Omega(t)),\\[0.15cm]
L_{\Omega,b}(t)u & := & \OU(t)u + \mathcal B(t) u.
\end{array}
\end{equation}
In the following, to simplify our notation, we do not distinguish between $L^p(\Omega)^d$ and $L^p(\Omega)$ and sometimes write
\begin{equation*}
L_\Omega(t)u(x)  := \Delta u(x)+\left(M(t)x + c(t)\right)\cdot\nabla u(x)
	-M(t)u(x),\quad t>0,\,x\in \Omega.
\end{equation*}
With these linear operators the linearization of the system \eqref{eq:NS_5} for some initial time $s\geq 0$ is now given by
\begin{equation}\label{eq:S_exterior}
\left\{\begin{array}{rcll}
u_t - L_{\Omega,b}(t)u + \nabla \p &=& 0, & \quad \mbox{in}\; (s,\infty)\times \Omega ,\\
\div u &=& 0,&  \quad \mbox{in}\; (s,\infty)\times \Omega ,\\
u &=& 0, & \quad \mbox{on}\; (s,\infty)\times \partial\Omega ,\\
u(s,\cdot) &=&f, &  \quad \mbox{in}\; \Omega.
\end{array}\right.
\end{equation}
As usual in the theory of the Navier-Stokes equations we shall later work in the space $L^p_\sigma(\Omega)$ of all solenoidal vector fields in $L^p$. Therefore we set
\begin{equation}
\begin{array}{rcl}
\D(A_{\Omega}(t))&:=&\{u \in W^{2,p}(\Omega)^d \cap W^{1,p}_0(\Omega)^d \cap L_\sigma^p(\Omega) : M(t)x \cdot \nabla u \in
L^p(\Omega)^d\},\\[0.15cm]
A_{\Omega}(t)u & := &\P_\Omega L_{\Omega}(t)u,
\end{array}
\end{equation}
and
\begin{equation}
\begin{array}{rcl}
\D(A_{\Omega,b}(t))&:=&\D(A_{\Omega}(t)),\\[0.15cm]
A_{\Omega,b}(t)u & := &\P_\Omega L_{\Omega,b}(t)u.
\end{array}
\end{equation}
By applying the Helmholtz projection $\P_\Omega$ to \eqref{eq:S_exterior} the pressure $\p$ can be eliminated and we may rewrite the equations as
an non-autonomous abstract Cauchy problem
\begin{equation}\label{eq:nACP_exterior}
\left\{\begin{array}{lcll}
u'(t) & = & A_{\Omega,b}(t)u(t),& 0\leq s < t ,\\[0.15cm]
u(s) & = & f.
\end{array}\right.
\end{equation}
It directly follows from \cite{Geissert/Heck/Hieber:2006a} that for fixed $s\ge 0$ the operator $A_{\Omega,b}(s)$ generates a $C_0$-semigroup
on $L^p_\sigma(\Omega)$, $1<p<\infty$, which is however \emph{not analytic}. Therefore we cannot apply standard generation results for evolution
systems of parabolic type (we refer to the monographs \cite{Lunardi:1995} and \cite{Tanabe:1997} for more information on this matter). Moreover, we note that the domain of $A_{\Omega,b}(t)$ depends on time $t$. Therefore, to
overcome this difficulty and in order to discuss well-posedness of \eqref{eq:nACP_exterior} we introduce the regularity space
\begin{equation*}
Y_{\Omega}:=\{u \in W^{2,p}(\Omega)^d \cap W^{1,p}_0(\Omega)^d \cap L^p_\sigma(\Omega) : |x| \nabla u_i(x) \in
L^p(\Omega)^d \;
\mathrm{for} \; i=1,\ldots, d\}
\end{equation*}
which is contained in $\D(A_{\Omega,b}(t))$ for every $t\ge0$.

Our first main result is the existence of a strongly
continuous evolution system on $L^p_\sigma(\Omega)$, $1<p<\infty$, that solves the
Cauchy problem \eqref{eq:nACP_exterior} on the regularity space $Y_\Omega$. This directly implies well-posedness of
\eqref{eq:nACP_exterior}. Moreover, we obtain
$L^p$-$L^q$ smoothing properties and gradient estimates for the evolution system. This is \emph{a priori} not obvious, since the
evolution system is not of parabolic type.
\begin{theorem}\label{thm:evolution_system_exterior}
Let $\Omega \subset \R^d$ be an exterior domain with $C^{1,1}$-boundary and $1<p<\infty$. Then there exists a unique evolution system $\{T_{\Omega,b}(t,s)\}_{(t,s)\in \Lambda}$ on $L^p_\sigma(\Omega)$ with the following properties.
\begin{itemize}
\item[(a)] For $(t,s)\in \Lambda$, the operator $T_{\Omega,b}(t,s)$ maps $Y_{\Omega}$ into $Y_{\Omega}$.
\item[(b)] For every $f\in Y_{\Omega}$ and $s\ge 0$, the map $t\mapsto T_{\Omega,b}(t,s)f$ is differentiable in $(s,\infty)$ and
\begin{equation}\label{eq:derivative_evolution_exterior}
\frac{\partial}{\partial t} T_{\Omega,b}(t,s)f = A_{\Omega,b}(t)T_{\Omega,b}(t,s)f .
\end{equation}
\item[(c)] For every $f\in Y_{\Omega}$ and $t>0$, the map $s\mapsto T_{\Omega,b}(t,s)f$ is differentiable in $[0,t)$ and
\begin{equation}\label{eq:derivative_evolution_exterior_2}
\frac{\partial}{\partial s} T_{\Omega,b}(t,s)f = - T_{\Omega,b}(t,s)A_{\Omega,b}(s)f.
\end{equation}
\item[(d)] For $T>0$ and $1<p<\infty$ there is a constant $C:=C(T)>0$ such that for every $f\in L^p_
\sigma(\Omega)$ $$\|\nabla T_{\Omega,b}(t,s)f\|_{p,\Omega} \leq C(t-s)^{-\frac {1}{2}}\|f\|_{p,\Omega}, \qquad (t,s)\in\tilde\Lambda_T.$$
\item[(e)] Let $T>0$ and $1< p\leq q < \infty$. Then there exists a constant $C:=C(T)>0$ such that for every $f\in L^p_
\sigma(\Omega)$
\begin{equation}\label{Lpq}
\|T_{\Omega,b}(t,s)f\|_{q,\Omega} \leq C(t-s)^{-\frac{d}{2}\left(\frac 1 p - \frac 1 q\right)}\|f\|_{p,\Omega}, \qquad
(t,s)\in\tilde\Lambda_T,
\end{equation}
\begin{equation}\label{nabla-Lpq}
\|\nabla T_{\Omega,b}(t,s)f\|_{q,\Omega} \leq C(t-s)^{-\frac{d}{2}\left(\frac 1 p - \frac 1 q\right)-\frac{1}{2}}\|f\|_{p,\Omega}, \qquad (t,s)\in\tilde\Lambda_T.
\end{equation}
\end{itemize}
\end{theorem}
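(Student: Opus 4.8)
The plan is to \emph{localize}: decompose $\Omega$ into a bounded neighbourhood of the obstacle and a region near infinity, solve the corresponding model problems there, and glue the solutions by cut-off functions, repairing the divergence with the Bogovskii operator of Lemma~\ref{prop_Bog1}. Near infinity we invoke the known whole-space result: by \cite{Hansel:2009, Geissert/Hansel:2010} the family $A_{\R^d}(\cdot)=\P_{\R^d}\OU(\cdot)$ generates a strongly continuous evolution system $\{T_{\R^d}(t,s)\}$ on $L^p_\sigma(\R^d)$, $1<p<\infty$, satisfying the analogues of (a)--(e); since $b(t,\cdot),\nabla b(t,\cdot)$ are bounded with compact support, a Dyson--Phillips series --- convergent because $\|\mathcal B(t)T_{\R^d}(t,r)f\|_{p}\le C\big((t-r)^{-1/2}+1\big)\|f\|_{p}$ by the gradient bound --- upgrades this to an evolution system $\{T_{\R^d,b}(t,s)\}$ with the same properties. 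Near the obstacle we use parabolic theory: fix $R>0$ with $\mathcal O\cup\mathrm{supp}\,\zeta\subset B(R)$ and $R'>R$, and set $D:=\Omega\cap B(R')$, a bounded $C^{1,1}$ domain carrying the support of $b$; on $D$ the coefficients of $\OU(t)$ are bounded, so $A_{D,b}(t):=\P_D L_{D,b}(t)$ is a lower-order perturbation of the Stokes operator, with relative bound zero, and generates an analytic semigroup with the $t$-independent domain $W^{2,p}(D)^d\cap W^{1,p}_0(D)^d\cap L^p_\sigma(D)$. As $M,c\in C^1$ and $b$ depends on $t$ in a $C^1$ fashion, $t\mapsto A_{D,b}(t)$ is $C^1$ into the bounded operators from that domain to $L^p_\sigma(D)$, so classical parabolic theory (see \cite{Tanabe:1997}) produces a parabolic evolution system $\{U_D(t,s)\}$ on $L^p_\sigma(D)$ with the standard bounds $\|U_D(t,s)\|\le C$, $\|A_{D,b}(t)U_D(t,s)\|\le C(t-s)^{-1}$, $\|\nabla U_D(t,s)\|\le C(t-s)^{-1/2}$, and --- by analyticity and Sobolev embedding --- the $L^p$-$L^q$ estimates of (e).

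\textbf{Parametrix and Volterra equation.} Pick $\varphi\in C_c^\infty(\R^d)$ with $\varphi\equiv 1$ on a neighbourhood of $B(R)$ and $\mathrm{supp}\,\varphi\Subset B(R')$, and let $\A$ be a bounded Lipschitz domain with $\mathrm{supp}\,\nabla\varphi\subset\A\Subset\{R<|x|<R'\}$ and Bogovskii operator $\BB_\A$ (gaining one derivative by Lemma~\ref{prop_Bog1}). Pasting $U_D$ near the obstacle and $T_{\R^d,b}$ near infinity through $\varphi$ and $1-\varphi$, and subtracting a $\BB_\A$-correction of the resulting divergence (together with a $\BB_\A$-correction of the inserted initial data), one obtains a parametrix $E(t,s)$ on $L^p_\sigma(\Omega)$ with $E(s,s)=\Id$ and $(\partial_t-A_{\Omega,b}(t))E(t,s)f=\mathcal R(t,s)f$, where $\mathcal R(t,s)$ collects only the commutators of $\varphi$ and of the Bogovskii correction with $\OU(t)$ together with localized pressure terms, all supported in $\A$. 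Granting the uniform bound $\|\mathcal R(t,s)f\|_{p,\Omega}\le C(t-s)^{-1/2}\|f\|_{p,\Omega}$ for $0\le s\le t\le T$ (and likewise across the $L^p$-$L^q$ scale), the Volterra equation $T_{\Omega,b}(t,s)=E(t,s)+\int_s^t E(t,r)\Phi(r,s)\,\d r$, with $\Phi=\sum_{k\ge1}\mathcal R_k$, $\mathcal R_1=\mathcal R$, $\mathcal R_{k+1}(t,s)=\int_s^t\mathcal R(t,r)\mathcal R_k(r,s)\,\d r$, is solved by a Neumann series converging in operator norm, first on a short interval $[s,s+\tau]$ with $\tau=\tau(T)$ and then on all of $\Lambda_T$ by the evolution law; letting $T\to\infty$ gives the system on $\Lambda$. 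Uniqueness is standard: given any other such system $\{S(t,s)\}$ and $f\in Y_\Omega$, the map $r\mapsto T_{\Omega,b}(t,r)S(r,s)f$ has vanishing derivative on $(s,t)$ by (b), (c) for $T_{\Omega,b}$ and the Cauchy problem for $S$, so $T_{\Omega,b}(t,s)f=S(t,s)f$, and density of $Y_\Omega$ in $L^p_\sigma(\Omega)$ concludes.

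\textbf{Transfer of properties.} Each of (a)--(e) is inherited from the building blocks through the explicit formula for $T_{\Omega,b}$: $U_D$ preserves $W^{2,p}(D)^d\cap W^{1,p}_0(D)^d\cap L^p_\sigma(D)$, $T_{\R^d,b}$ preserves the whole-space analogue of $Y_\Omega$, $\BB_\A$ maps $W^{k,p}_0(\A)$ into $W^{k+1,p}_0(\A)^d$, and the weight $|x|$ in the definition of $Y_\Omega$ only matters where $T_{\Omega,b}(t,s)$ agrees with $T_{\R^d,b}(t,s)$ up to contributions supported in the bounded set $\A$; this gives (a). Differentiating the formula and using the Cauchy problems for the building blocks yields (b) and the estimate (d); then (c) follows from (b) by differentiating $r\mapsto T_{\Omega,b}(t,r)T_{\Omega,b}(r,s)f$ on $Y_\Omega$. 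Finally (e) survives the Neumann series, since each iterate $\mathcal R_k$ improves integrability by the appropriate power of $(t-s)$.

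\textbf{Main obstacle.} The heart of the matter is the remainder estimate $\|\mathcal R(t,s)\|\le C(t-s)^{-1/2}$. A naive parametrix does \emph{not} achieve it: differentiating the Bogovskii correction and accounting for the localized pressure produces contributions of order $(t-s)^{-1}$ when $\mathcal R(t,s)$ is applied to a general $f\in L^p_\sigma(\Omega)$, which is \emph{not} integrable at the diagonal. In the autonomous case \cite{Geissert/Heck/Hieber:2006a} this is bypassed via resolvents, where the analogous terms need only decay in the spectral parameter; here no resolvent is available. One must therefore set up the localization so that, on $\mathrm{supp}\,\nabla\varphi$, the two building blocks cancel to leading order --- e.g. by supplying them with data that coincide on a neighbourhood of the overlap region, or by exploiting interior regularity there --- so that their difference gains a factor $(t-s)^{1/2}$ and the remainder drops to the integrable order $(t-s)^{-1/2}$. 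Precisely here the non-autonomous character turns out to be \emph{harmless}: on the compact set $\A$ the coefficients of $\OU(t)$ are bounded and $C^1$ in $t$, while the genuinely hard ingredient --- the unbounded drift $M(t)x\cdot\nabla$ --- has been wholly delegated to the whole-space results of \cite{Hansel:2009, Geissert/Hansel:2010}. A secondary point is the invariance of $Y_\Omega$ (strictly smaller than $\D(A_{\Omega,b}(t))$) together with \eqref{eq:derivative_evolution_exterior} on it, which once more reduces to the corresponding invariance on $\R^d$ and the $t$-independence of $\D(A_{D,b}(t))$.
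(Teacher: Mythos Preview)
Your overall localization strategy --- glue the whole-space solution to the bounded-domain parabolic solution via cut-offs and Bogovskii corrections, write down an integral equation for $T_{\Omega,b}$, and solve by a Neumann-type series --- is exactly the paper's route. You have also put your finger on the single genuine difficulty: the remainder $\mathcal R(t,s)$ naively contains an $L_{\Omega,b}(t)\BB_2(\ldots)$-type term and a localized pressure term, each of which looks like $(t-s)^{-1}$ and hence non-integrable.

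Where you diverge from the paper is in how this obstacle is overcome. You propose arranging the two building blocks to cancel on the overlap region so that the difference gains a factor $(t-s)^{1/2}$. The paper does \emph{not} use any such cancellation; instead it dismantles the bad terms one by one. First, for the Bogovskii correction of the time derivative, it writes $\BB_2((\nabla\varphi)\cdot\partial_t u)=\BB_2((\nabla\varphi)\cdot L(t)u)$ and invokes part (b) of Lemma~\ref{prop_Bog1}: $\BB_2$ extends to $W^{-1,p}_0(K_2)\to L^p(K_2)^d$, and an integration by parts shows $\|(\nabla\varphi)\cdot L(t)u\|_{W^{-1,p}}\le C\|u\|_{1,p}$, so this contribution costs only $(t-s)^{-1/2}$. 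Second, for the pressure term $(\nabla\varphi)\p_D$ the paper proves a separate estimate (Lemma~\ref{lem:pressure}): dualizing against the solution of a Neumann problem and using the trace embedding $H^{1/p+\eps,p}\hookrightarrow L^p(\partial D)$ gives $\|\p_D(t)\|_{p,D}\le C(t-s)^{-\gamma/2}$ for any $\gamma\in(1+\tfrac1p,2)$. The net outcome is $\|F(t,s)f\|_{p,\Omega}\le C(t-s)^{-\gamma/2}\|f\|_{p,\Omega}$ with $\gamma/2<1$ --- weaker than the $(t-s)^{-1/2}$ you aim for, but integrable, and that is all the iteration (Lemma~\ref{lemma:iterated}) needs. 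Your cancellation idea, if it could be made precise, would give a sharper exponent; but you do not carry it out, whereas the paper's two devices are concrete and short.

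Two further points. (i) You build $T_{\R^d,b}$ from $T_{\R^d}$ by a Dyson--Phillips series; the paper sidesteps this entirely by choosing the cut-off $\varphi$ so that $\mathrm{supp}\,b\subset\{\varphi=0\}$, whence the $\R^d$-piece sees only $L_{\R^d}(t)$ and no $b$-perturbation is needed. (ii) Your proposed route to (c) --- differentiate $r\mapsto T_{\Omega,b}(t,r)T_{\Omega,b}(r,s)f$ and use (b) --- is circular: the $r$-derivative of the first factor is precisely the $s$-differentiability you are trying to establish. The paper instead computes $\partial_s W(t,s)f$ directly, isolates a new remainder $G(t,s)f$, estimates it in $H^{\gamma,p}$, builds from it a second series $V(t,s)$, defines $\tilde T(t,s)f:=W(t,s)f+\int_s^t V(t,r)W(r,s)f\,\d r$, checks $\partial_s\tilde T(t,s)f=-\tilde T(t,s)A_{\Omega,b}(s)f$, and finally shows $\tilde T=T_{\Omega,b}$ by differentiating $r\mapsto \tilde T(t,r)T_{\Omega,b}(r,s)f$ (now legitimate, since both derivatives are known). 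This part is not automatic and should not be glossed over.
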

\begin{remark}
\begin{itemize}
\item[(a)] The analogous result holds for the evolution system $\{T_\Omega(t,s)\}_{(t,s)\in\Lambda}$ associated to the operators $A_\Omega(\cdot)$.
\item[(b)] If we denote the evolution system on $L^p_\sigma(\Omega)$ by $\{T^p_{\Omega,b}(t,s)\}_{(t,s)\in \Lambda}$, then the family of evolution systems is consistent in the sense that
\begin{equation*}
T^p_{\Omega,b}(t,s) f = T^q_{\Omega,b}(t,s) f, \qquad \quad (t,s)\in \Lambda, \qquad \quad f \in L^p_\sigma(\Omega)\cap L^q_\sigma (\Omega),
\end{equation*}
holds for $1<p,q<\infty$.
\end{itemize}
\end{remark}
The basic idea to prove Theorem \ref{thm:evolution_system_exterior} is to study first the whole space case $\R^d$ and the case of a
bounded domain $D$, which is
done in Sections \ref{sect_whole} and \ref{sect_bounded} respectively. Then in Section \ref{sect:exterior} we use some cut-off techniques to construct the evolution system for the exterior domain $\Omega$. Our method, which was already presented in \cite{Hansel/Rhandi:2010} for treating non-autonomous Ornstein-Uhlenbeck equations, then also allows to obtain the $L^p$-$L^q$ smoothing properties and the gradient estimates from the corresponding estimates in $\R^d$ and $D$.

Next we come back to the nonlinear problem \eqref{eq:NS_5}. Again by applying the Helmholtz projection $\P_\Omega$ we can rewrite this system in abstract form as
\begin{equation}\label{eq:NS_abstract}
\left\{
\begin{array}{rclll}
u'(t)- A_{\Omega,b}(t)u(t) + \P_\Omega( u \cdot \nabla u)(t) &=&\P_\Omega F_1(t), & t>0,  \\[0.15cm]
u(0)&=&f,&
\end{array}\right.
\end{equation}
where $F_1$ is given in \eqref{eq:inhom}. By the Duhamel principle (variation of constant formula) this problem can be reduced to the
integral equation
\begin{align}\label{eq:Duhamel}
&u(t) = T_{\Omega,b}(t,0)f - \int_0^t T_{\Omega,b}(t,s) \mathbb P_\Omega(u\cdot \nabla u)(s) \d s\\
&\qquad\qquad\qquad +  \int_0^t T_{\Omega,b}(t,s) \mathbb P_\Omega F_1(s)  \d s,\quad \qquad t\geq0,\notag
\end{align}
in $L^p_{\sigma}(\R^d)$.
For given $0<T_0\leq \infty$,
we call a function $u\in C([0,T_0);L^p_{\sigma}(\R^d))$ a \textit{mild solution} of (\ref{eq:NS_abstract})  if $u$ satisfies
the integral equation (\ref{eq:Duhamel}).
By adjusting Kato's iteration scheme %(see \cite{Kato:1984,Giga:1986})
to our situation the existence of a unique local mild solution follows.
\begin{theorem}\label{thm:Kato}
Let $2\leq d\leq p \le q < \infty$ and $f\in
L^p_{\sigma}(\Omega)$. Then there exists $T>0$ and a unique mild
solution $u\in C([0,T];L^p_{\sigma}(\Omega))$ of
(\ref{eq:NS_abstract}), which has the properties
\begin{equation}\label{eq:mild_solution_prop1}
t^{\frac{d}{2}\left(\frac{1}{p}-\frac{1}{q}\right)}u(t) \in
C([0,T];L^q_{\sigma}(\Omega)),
\end{equation}
\begin{equation}\label{eq:mild_solution_prop2}
t^{\frac{d}{2}\left(\frac{1}{p}-\frac{1}{q}\right)+\frac{1}{2}}\nabla
u(t) \in C([0,T];L^p(\Omega)^{d\times d}).
\end{equation}
If $p<q$, then in addition
\begin{equation}\label{eq:mild_solution_prop3}
t^{\frac{d}{2}\left(\frac{1}{p}-\frac{1}{q}\right)}\|u(t)\|_{q,\Omega} +
t^{\frac{1}{2}}\|\nabla u(t)\|_{p,\Omega} \rightarrow 0 \qquad \mbox{as\;} t
\rightarrow 0^+.
\end{equation}%\vspace{0.05cm}
\end{theorem}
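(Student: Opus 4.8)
The plan is to recast \eqref{eq:NS_abstract} as the integral equation \eqref{eq:Duhamel} and to solve it by Kato's iteration scheme, with the evolution system $\{T_{\Omega,b}(t,s)\}$ and its smoothing estimates from Theorem~\ref{thm:evolution_system_exterior} taking over the role played by the heat semigroup in the classical Fujita--Kato theory. Fix $q$ as in the statement; when $p=q=d$ one first runs the iteration with an auxiliary exponent $q'\in(d,\infty)$ in place of $q$ and recovers the statement for $q=d$ at the end. Put $\alpha:=\frac d2(\frac1p-\frac1q)$ and, for $0<T\le1$, let $X_T$ be the space of functions $u\colon(0,T]\to L^q_\sigma(\Omega)$ with $\nabla u(t)\in L^p(\Omega)^{d\times d}$, continuous on $(0,T]$ into both spaces, with
\[
\|u\|_{X_T}:=\sup_{0<t\le T}t^{\alpha}\|u(t)\|_{q,\Omega}+\sup_{0<t\le T}t^{1/2}\|\nabla u(t)\|_{p,\Omega}<\infty
\]
and, when $\alpha>0$, such that $t^{\alpha}\|u(t)\|_{q,\Omega}\to0$ and $t^{1/2}\|\nabla u(t)\|_{p,\Omega}\to0$ as $t\to0^+$ (for $p=q$ the first term is $\sup_t\|u(t)\|_{p,\Omega}$ and no decay is imposed). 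On $X_T$ one studies the map $\Phi(u):=v-\mathcal G(u,u)$, where
\begin{align*}
v(t)&:=T_{\Omega,b}(t,0)f+\int_0^tT_{\Omega,b}(t,s)\,\P_\Omega F_1(s)\,\d s,\\
\mathcal G(u,w)(t)&:=\int_0^tT_{\Omega,b}(t,s)\,\P_\Omega\bigl(u(s)\cdot\nabla w(s)\bigr)\,\d s ,
\end{align*}
and $F_1$ is the field \eqref{eq:inhom}; a mild solution of \eqref{eq:NS_abstract} is precisely a fixed point of $\Phi$ in $X_T$.

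First I would dispose of the linear and inhomogeneous parts. By strong continuity $T_{\Omega,b}(\cdot,0)f\in C([0,T];L^p_\sigma(\Omega))$, and Theorem~\ref{thm:evolution_system_exterior}(d),(e) give $t^{\alpha}\|T_{\Omega,b}(t,0)f\|_{q,\Omega}+t^{1/2}\|\nabla T_{\Omega,b}(t,0)f\|_{p,\Omega}\le C\|f\|_{p,\Omega}$; a density argument (approximating $f$ in $L^p_\sigma(\Omega)$ by elements of $L^q_\sigma(\Omega)$, resp. of $\D(A_{\Omega,b}(0))$) yields the decay as $t\to0^+$ when $\alpha>0$, so $T_{\Omega,b}(\cdot,0)f\in X_T$. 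The cut-off data $F_1$ is harmless: by Lemma~\ref{prop_Bog1} and the construction of $b$, for every $t\ge0$ the field $b(t,\cdot)$ is smooth and supported in the fixed compact set $\mathrm{supp}\,\zeta$, with $t\mapsto b(t,\cdot)$ of class $C^1$; hence (the unbounded factor $M(t)x$ in \eqref{eq:inhom} being multiplied by the compactly supported $\nabla b$, it causes no growth) $\P_\Omega F_1\in C([0,\infty);L^r_\sigma(\Omega))$ for every $1<r<\infty$ and is locally bounded in $t$. Using Theorem~\ref{thm:evolution_system_exterior}(d),(e) again and $\alpha<1$ (which holds since $p\ge d$), the second term of $v$ lies in $X_T\cap C([0,T];L^p_\sigma(\Omega))$ with norm $O(T^{\delta})$ for some $\delta>0$; in particular $\|v\|_{X_T}$ is controlled, and it tends to $0$ as $T\to0^+$ in the case $p<q$.

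For the bilinear term I would bound $\|u(s)\cdot\nabla w(s)\|_{r,\Omega}\le\|u(s)\|_{q,\Omega}\|\nabla w(s)\|_{p,\Omega}$ with $\tfrac1r=\tfrac1p+\tfrac1q$, the choice of $q$ (taken strictly above $d$ when $p=d$) ensuring $1<r<\infty$, in particular also when $d=2$; alternatively one may use $\div u=0$ to write $u\cdot\nabla w=\div(u\otimes w)$ and the divergence-form estimate dual to \eqref{nabla-Lpq}. Applying \eqref{Lpq} and \eqref{nabla-Lpq} to $T_{\Omega,b}(t,s)\P_\Omega(u\cdot\nabla w)(s)$ and inserting $\|u(s)\|_{q,\Omega}\le s^{-\alpha}\|u\|_{X_T}$, $\|\nabla w(s)\|_{p,\Omega}\le s^{-1/2}\|w\|_{X_T}$, every time integral collapses to a Beta integral $\int_0^t(t-s)^{-a}s^{-b}\,\d s$ with $a,b<1$; these admissibility conditions reduce to $p\ge d$, with the proviso that $q>d$ in the critical case $p=d$. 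This gives
\[
\|\mathcal G(u,w)\|_{X_T}+\sup_{0<t\le T}\|\mathcal G(u,w)(t)\|_{p,\Omega}\le C\,\omega(T)\,\|u\|_{X_T}\|w\|_{X_T},
\]
where $\omega(T)=T^{\delta}$ for some $\delta>0$ when $p>d$, while $\omega(T)$ is a constant independent of $T\le1$ when $p=d$. That the $X_T$-seminorms of $\mathcal G(u,w)$ decay as $t\to0^+$ follows by the same estimates with the fixed norms replaced by the moduli $\eta_u(s):=\sup_{0<\tau\le s}\tau^{\alpha}\|u(\tau)\|_{q,\Omega}\to0$ and $\eta_{\nabla u}(s):=\sup_{0<\tau\le s}\tau^{1/2}\|\nabla u(\tau)\|_{p,\Omega}\to0$.

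Combining these estimates, $\Phi$ maps a small ball $B_\rho\subset X_T$ into itself and is a contraction there once $\|v\|_{X_T}$ and $\rho$ are small relative to $(C\,\omega(T))^{-1}$: when $p>d$ this holds for $T$ small because $\omega(T)=T^\delta\to0$, while when $p=d$ it holds for $T$ small because $\|v\|_{X_T}\to0$, as noted above. The Banach fixed point theorem then produces a unique $u\in B_\rho$; one checks $u\in C([0,T];L^p_\sigma(\Omega))$ and $u(t)\to f$ in $L^p(\Omega)$ as $t\to0^+$ by estimating $u(t)-T_{\Omega,b}(t,0)f$ in $L^p$ via the moduli $\eta_u,\eta_{\nabla u}$, and \eqref{eq:mild_solution_prop1}--\eqref{eq:mild_solution_prop3} (for every $q\ge p$, including the original $q$ when $p=d$) are read off by inserting the fixed point into \eqref{eq:Duhamel} and invoking Theorem~\ref{thm:evolution_system_exterior}(d),(e) term by term. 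Uniqueness within the solution class follows by applying the bilinear estimate to the difference of two solutions together with a Gronwall argument on a short interval, followed by continuation. The step I expect to be the main obstacle is the bookkeeping of the scaling exponents in the critical case $p=d$ — where the nonlinearity provides no gain in $T$ and the required smallness must instead be extracted from the evolution system acting on $f$ and on $F_1$ — together with the care needed at $t=0$ for the limit assertions and, for $d=2$, the use of an auxiliary exponent $q>2$ so that $\P_\Omega$ acts on the space containing $u\cdot\nabla u$; the remainder is the standard Kato scheme, the genuinely new inputs being the estimates of Theorem~\ref{thm:evolution_system_exterior} and the regularity of $F_1$.
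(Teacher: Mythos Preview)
Your proposal is correct and follows essentially the same route as the paper: the paper runs Kato's explicit Picard iteration $u_{k+1}=\Phi(u_k)$ and shows the sequence is Cauchy in the weighted norms, while you invoke the Banach fixed point theorem for the same map $\Phi$ on the same space $X_T$; the underlying bilinear estimate via $\tfrac1r=\tfrac1p+\tfrac1q$, the handling of $F_1$, the density argument for the decay at $t=0^+$ (which is Lemma~\ref{lem5.3} in the paper), and the use of an auxiliary exponent $q'>d$ when $p=q=d$ are all identical. The only cosmetic difference is that for $p=q>d$ the paper keeps an artificial weight $\beta=\tfrac{1-\delta}{2}$ in the $L^p$-component, whereas you use $\alpha=0$ directly and rely on the positive power $T^{1/2-d/(2p)}$ coming from the Beta integral; both work.
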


\section{The linearized problem in $\R^d$}\label{sect_whole}
In this section we study the linearized problem in the whole space $\R^d$. This situation was already studied in detail by
the first author in \cite{Hansel:2009} and by Geissert and the first author in \cite{Geissert/Hansel:2010}. Here we recall the main results. For this purpose we set
\begin{equation}\label{eq:operator_Rd}
\begin{array}{rcl}
\D(L_{\R^d}(t))&:=& \{ u\in W^{2,p}(\R^d)^d: M(t)x\cdot
	\nabla u\in L^p(\R^d)^d\},\\[0.2cm]
L_{\R^d}(t)u & := & \OU(t)u.
\end{array}
\end{equation}
and
\begin{equation}
\begin{array}{rcl}
\D(A_{\R^d}(t))&:=& \D(L_{\R^d}(t)) \cap L^p_\sigma (\R^d),\\[0.2cm]
A_{\R^d}(t)u & := & \OU(t)u.
\end{array}
\end{equation}
Note that $A_{\R^d}(t)$ is indeed an operator on $L^p_\sigma(\R^d)$, since an easy computation yields that $\div A_{\R^d}(t)u = 0$ for all $u\in C_{c,\sigma}^\infty(\R^d)$. In particular, the operator $L_{\R^d}(t)$ commutes with the Helmholtz projection $\P_{\R^d}$ in $\R^d$. Moreover we introduce the regularity space
\begin{equation*}
Y_{\R^d}:=\{u \in W^{2,p}(\R^d)^d \cap L^p_\sigma(\R^d) : |x| \nabla u_i(x) \in L^p(\R^d)^d \; \mathrm{for} \; i=1,\ldots, d\}.
\end{equation*}
For every $t\ge0$, the space $Y_{\R^d}$ is contained in $\D(A_{\R^d}(t))$.

In the following we denote by $\{U(t,s)\}_{t,s\ge 0}$ the evolution system in $\R^d$ that satisfies
\begin{equation*}
\left\{\begin{array}{lcll}
\frac{\partial}{\partial t} U(t,s) & = & -M(t)U(t,s),& t\ge s,\\[0.2cm]
U(s,s) & = & \mathrm{Id}.
\end{array}\right.
\end{equation*}
Now, for $f\in L^p_\sigma(\R^d)$ and $s\geq 0$, we set $T_{\R^d}(s,s)=\mathrm{Id}$ and for $(t,s)\in \widetilde{\Lambda}$ we define
\begin{equation}\label{eq:evol_sys_whole}
T_{\R^d}(t,s)f(x) = (k(t,s,\cdot)\ast f)(U(s,t)x+g(t,s)) \qquad x\in \R^d,
\end{equation}
where
\begin{equation}
k(t,s,x):= \frac{1}{(4\pi)^{\frac d 2} (\det Q_{t,s})^{\frac 1 2}} U(t,s) \mathrm{e}^{-\frac 1 4 \langle Q_{t,s}^{-1}x,x\rangle}, \qquad x\in \R^d,
\end{equation}
\begin{equation}
g(t,s) = \int_s^t U(s,r)c(r) \d r \qquad \mbox{and} \qquad Q_{t,s}=\int_s^t U(s,r)U^*(s,r)\d r.
\end{equation}

For the derivation of this solution formula we refer to \cite[Section 3]{Geissert/Hansel:2010}. The explicit formula now allows to prove the following result (see \cite{Geissert/Hansel:2010, Hansel:2009} and \cite[Section 2]{Hansel/Rhandi:2010} for details).
\begin{prop}\label{prop:evolution_whole}
Let $1<p<\infty$. Then the family of operators $\{T_{\R^d}(t,s)\}_{(t,s)\in \Lambda}$ defined in \eqref{eq:evol_sys_whole} is a strongly continuous evolution system on $L^p_\sigma(\R^d)$ with the following properties.
\begin{itemize}
\item[(a)] For $(t,s)\in \Lambda$, the operator $T_{\R^d}(t,s)$ maps $Y_{\R^d}$ into $Y_{\R^d}$.
\item[(b)] For every $f\in Y_{\R^d}$ and every $s\in[0,\infty)$, the map $t\mapsto T_{\R^d}(t,s)f$ is differentiable in $(s,\infty)$ and
\begin{equation}\label{eq:derivative_t}
\frac{\partial}{\partial t} T_{\R^d}(t,s)f = A_{\R^d}(t)T_{\R^d}(t,s)f .
\end{equation}
\item[(c)] For every $f\in Y_{\R^d}$ and $t\in (0,\infty)$, the map $s\mapsto T_{\R^d}(t,s)f$ is differentiable in $[0,t)$
and
\begin{equation}\label{eq:derivative_s}
\frac{\partial}{\partial s} T_{\R^d}(t,s)f = -T_{\R^d}(t,s)A_{\R^d}(s)f .
\end{equation}
\item[(d)] Let $1<p\leq q \leq \infty$. Then there exists a constant
	$C>0$ such that for every $f\in L^p_\sigma(\R^d)$
\begin{align}
	\|T_{\R^d}(t,s)f\|_{q}
	&\leq C
(t-s)^{-\frac{d}{2}\left(\frac{1}{p}-\frac{1}{q}\right)}\|f\|_{p},&
 (t,s)\in \widetilde{\Lambda},\label{eq:LpLqsmoothing}\\
\|\nabla T_{\R^d}(t,s)f\|_{q}
&\leq C
(t-s)^{-\frac{d}{2}\left(\frac{1}{p}-\frac{1}{q}\right)-\frac{1}{2}}\|f\|_{p},
&  (t,s)\in \widetilde{\Lambda}. \label{eq:GradientEstimate}
\end{align}
\end{itemize}
\end{prop}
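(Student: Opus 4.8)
The plan is to verify each property directly from the explicit formula \eqref{eq:evol_sys_whole}, treating the kernel $k(t,s,\cdot)$ as a Gaussian that has been rotated by the orthogonal-type flow $U(t,s)$ and recentered by $g(t,s)$. First I would check the \emph{algebraic} evolution law $T_{\R^d}(t,r)T_{\R^d}(r,s)=T_{\R^d}(t,s)$ and strong continuity. The semigroup-type identity reduces to two facts about the data: the cocycle identity $U(s,r)U(r,t)=U(s,t)$ for the matrix flow, and the corresponding composition rules for $g(t,s)$ and $Q_{t,s}$ (namely $g(t,s)=g(t,r)+U(s,r)g(r,s)$-type relations and $Q_{t,s}=Q_{t,r}+\text{conjugate of }Q_{r,s}$), which follow by splitting the defining integrals at $r$ and changing variables; substituting these into the convolution-with-Gaussian structure and using that the convolution of two Gaussians with covariances $Q_{t,r}$ and (the transported) $Q_{r,s}$ is a Gaussian with covariance $Q_{t,s}$. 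Strong continuity on $L^p_\sigma(\R^d)$ follows because as $t\downarrow s$ the Gaussian $k(t,s,\cdot)$ is an approximate identity, $g(t,s)\to 0$, and $U(s,t)\to\mathrm{Id}$, so $T_{\R^d}(t,s)f\to f$ in $L^p$ for $f\in C_{c,\sigma}^\infty$ and then for all $f$ by the uniform bound $\|T_{\R^d}(t,s)\|_{p\to p}\le C$ on compact time intervals (which itself comes from Young's inequality, $\|k(t,s,\cdot)\|_1$ being bounded since the Gaussian has unit-ish mass up to the bounded factor $\|U(t,s)\|$, and the change of variables $x\mapsto U(s,t)x+g(t,s)$ being volume-preserving because $U(s,t)$ is orthogonal). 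One should also note $T_{\R^d}(t,s)$ maps into $L^p_\sigma$: this is because $U(s,t)$ is orthogonal so the substitution commutes with the divergence, and convolution preserves solenoidality.

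For part (a), that $T_{\R^d}(t,s)$ preserves $Y_{\R^d}$, I would differentiate \eqref{eq:evol_sys_whole}: $\nabla_x[T_{\R^d}(t,s)f](x) = U(s,t)^{\mathrm T}\,(k(t,s,\cdot)\ast \nabla f)(U(s,t)x+g(t,s))$ for $f\in Y_{\R^d}$ (differentiating under the convolution), plus the contribution from differentiating the kernel itself if one prefers to move the derivative onto $k$. Then $|x|\,|\nabla_x[T_{\R^d}(t,s)f](x)|$ is controlled by splitting $|x|\lesssim |U(s,t)x+g(t,s)-y| + |y| + |g(t,s)|$ inside the convolution integral, the first piece being absorbed by the extra decay of the Gaussian $k$ (a Gaussian times $|z|$ is still integrable), the second by the hypothesis $|{\cdot}|\nabla f_i\in L^p$, and the last by boundedness of $g$. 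The second-order membership $W^{2,p}$ is similar using that $f\in W^{2,p}$ and convolution with an $L^1$ Gaussian is bounded on $L^p$; membership in $L^p_\sigma$ was already noted.

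For parts (b) and (c), the differentiability in $t$ and $s$, I would differentiate the explicit formula in each variable and identify the result. Differentiating \eqref{eq:evol_sys_whole} in $t$ produces terms from $\partial_t U(t,s)=-M(t)U(t,s)$, from $\partial_t k(t,s,\cdot)$ (which involves $\partial_t Q_{t,s}=U(s,t)U^*(s,t)$ and $\partial_t \det Q_{t,s}$), and from $\partial_t[U(s,t)x+g(t,s)]$; the point is that $k(t,s,\cdot)$ solves, in its Gaussian variable, the Fokker–Planck equation associated with $\OU(t)$, so collecting all terms yields exactly $\OU(t)$ applied to $T_{\R^d}(t,s)f$, i.e. \eqref{eq:derivative_t}, the differentiation under the integral being justified for $f\in Y_{\R^d}$ by the decay estimates of step (a). Equation \eqref{eq:derivative_s} then follows either by the same direct computation with respect to $s$, or more cleanly by differentiating the identity $T_{\R^d}(t,r)T_{\R^d}(r,s)f=T_{\R^d}(t,s)f$ in $r$ at $r=s$ and using (b) together with the fact that $T_{\R^d}(t,r)$ is bounded and strongly continuous.

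Finally, for part (d), the $L^p$–$L^q$ smoothing and gradient estimates, I would again use Young's inequality: $\|T_{\R^d}(t,s)f\|_q = \|(k(t,s,\cdot)\ast f)\circ(\text{orthogonal }+\text{shift})\|_q = \|k(t,s,\cdot)\ast f\|_q \le \|k(t,s,\cdot)\|_r\|f\|_p$ with $\tfrac1q+1=\tfrac1r+\tfrac1p$, and likewise $\|\nabla T_{\R^d}(t,s)f\|_q\le \|\nabla_x k(t,s,\cdot)\|_r\|f\|_p$ (moving the derivative onto the kernel). The whole estimate then reduces to bounding $\|k(t,s,\cdot)\|_r$ and $\|\nabla k(t,s,\cdot)\|_r$; since $k(t,s,x)$ is a Gaussian with covariance matrix $\approx Q_{t,s}$ and $Q_{t,s}=\int_s^t U(s,r)U^*(s,r)\,\d r$ satisfies $c(t-s)\mathrm{Id}\le Q_{t,s}\le C(t-s)\mathrm{Id}$ on $\widetilde\Lambda_T$ (lower bound because $U(s,r)U^*(s,r)\ge c\,\mathrm{Id}$ by continuity of the flow on the compact set, upper bound likewise), the standard Gaussian scaling gives $\|k(t,s,\cdot)\|_r\le C(t-s)^{-\frac d2(1-\frac1r)}=C(t-s)^{-\frac d2(\frac1p-\frac1q)}$ and $\|\nabla k(t,s,\cdot)\|_r\le C(t-s)^{-\frac d2(\frac1p-\frac1q)-\frac12}$, with the extra factor $\|U(t,s)\|\le C$ harmless.

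The main obstacle I expect is the bookkeeping in steps (b)–(c): one must carefully collect the several terms arising from differentiating the product $U(t,s)(\det Q_{t,s})^{-1/2}\mathrm{e}^{-\frac14\langle Q_{t,s}^{-1}x,x\rangle}$ composed with the moving argument $U(s,t)x+g(t,s)$, verify that they assemble into $\Delta + (M(t)x+c(t))\cdot\nabla - M(t)$ acting on $T_{\R^d}(t,s)f$ rather than something off by lower-order terms, and justify the interchange of differentiation and integration uniformly enough to land in $Y_{\R^d}$. This is exactly the heat-kernel-type computation carried out in \cite{Geissert/Hansel:2010}, so I would organize the argument to quote the identities $\partial_t Q_{t,s}=U(s,t)U^*(s,t)$, $\partial_s Q_{t,s}=-M(s)^{\mathrm T}$-type relations, and the ODEs for $U$ and $g$, and then reduce everything to the classical fact that the Gaussian $k$ is the fundamental solution of the (non-autonomous) Ornstein–Uhlenbeck equation.
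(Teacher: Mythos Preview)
Your approach is correct and is in fact the natural direct verification from the explicit formula \eqref{eq:evol_sys_whole}. Note, however, that the paper itself does \emph{not} prove this proposition: it simply states the result and refers to \cite{Geissert/Hansel:2010, Hansel:2009} and \cite[Section~2]{Hansel/Rhandi:2010} for the details. What you have written is essentially a sketch of the argument carried out in those references, so there is nothing to compare against in the present paper beyond the citation.

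One point you should sharpen. In your treatment of part~(d) you bound $Q_{t,s}$ from above and below by $c(t-s)\mathrm{Id}$ and $C(t-s)\mathrm{Id}$ ``on $\widetilde\Lambda_T$ by continuity on the compact set'', which would only yield a $T$-dependent constant. But the proposition asserts the estimates on all of $\widetilde\Lambda$ with a universal constant. The missing observation is one you already made elsewhere in your sketch: since $M(t)$ is skew-symmetric, the flow $U(t,s)$ is \emph{orthogonal} for all $(t,s)$, and therefore $U(s,r)U^*(s,r)=\mathrm{Id}$, giving $Q_{t,s}=(t-s)\mathrm{Id}$ \emph{exactly}. Thus $k(t,s,\cdot)$ is simply the standard heat kernel multiplied by the orthogonal matrix $U(t,s)$, and the Young-inequality bounds $\|k(t,s,\cdot)\|_r\le C(t-s)^{-\frac d2(1-\frac1r)}$ and $\|\nabla k(t,s,\cdot)\|_r\le C(t-s)^{-\frac d2(1-\frac1r)-\frac12}$ hold with constants depending only on $d$ and $r$, uniformly in $(t,s)\in\widetilde\Lambda$. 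The same orthogonality also makes your cocycle computation for $Q_{t,s}$ in the evolution-law step trivial (the ``conjugate of $Q_{r,s}$'' is just $Q_{r,s}$ itself), and cleans up the bookkeeping in (b)--(c) considerably.
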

%
%\begin{remark}
%Note, that the estimates \eqref{eq:LpLqsmoothing} and \eqref{eq:GradientEstimate} hold globally in time, i.e. for all $(t,s)\in \widetilde{\Lambda}$, without any additional assumptions on $M(\cdot)$. So in particular, $\{T_{\R^d}(t,s)\}_{(t,s)\in \Lambda}$ is uniformly bounded, even if $\|M(\cdot)\|$ tends to infinity.
%\end{remark}
%
By Proposition \ref{prop:evolution_whole} and the fact that $L_{\R^d}(t)=A_{\R^d}(t)$ on $L^p_\sigma (\R^d)$ it follows that the solution to the problem
\begin{equation}\label{eq:problem_bounded}
\left\{\begin{array}{rcll}
u_t - L_{\R^d}(t)u &=& 0, & \quad \mbox{in}\; (s,\infty)\times \R^d ,\\
\div u &=& 0,&  \quad \mbox{in}\; (s,\infty)\times \R^d ,\\
u(s,\cdot) &=&f, &  \quad \mbox{in}\; \R^d,
\end{array}\right.
\end{equation}
for $s\geq 0$, is given by $u(t,x)=T_{\R^d}(t,s)f(x)$. So in the whole space case the pressure is constant, thus we may assume that the pressure is even zero.
\section{The linearized problem in bounded domains}\label{sect_bounded}
In this section let $D\subset \R^d$ be a bounded domain with $C^{1,1}$-boundary. We set
\begin{equation}\label{eq:operator_bounded}
\begin{array}{rcl}
\D(L_{D,b}(t))&:=& \D(L_{D,b}) := W^{2,p}(D)^d\cap W^{1,p}_0(D)^d,\\[0.2cm]
L_{D,b}(t)u & := & \OU_{D,b}(t)u.
\end{array}
\end{equation}
Moreover, we define the operators
\begin{equation}\label{eq:operator_bounded}
\begin{array}{rcl}
\D(A_{D,b}(t))&:=& \D(A_{D,b}) := W^{2,p}(D)^d\cap W^{1,p}_0(D)^d \cap L^p_\sigma(D),\\[0.2cm]
A_{D,b}(t)u & := & \P_D L_{D,b}(t)u.
\end{array}
\end{equation}
In a bounded domain also the coefficients of the term $M(t)x\cdot \nabla$ are bounded. Thus, it follows directly from the classical perturbation theory for the Stokes operator that for fixed $s\ge 0$ the operator $(A_{D,b}(s),\D(A_{D,b}))$ generates an analytic semigroup (cf. \cite[Proposition 3.1]{Geissert/Heck/Hieber:2006a}). Moreover, by our assumptions on the coefficients we obtain that the map $t\mapsto A_{D,b}(t)$ belongs to $C^1(\R_+, \L(\D(A_{D,b}),L^p_\sigma(D)))$. Thus, the following result follows from the theory of parabolic evolution systems (see \cite[Chapter 6]{Lunardi:1995} and \cite[Section 2.3]{Grisvard}).
\begin{prop}\label{prop:evolution_bounded}
Let $D\subset \R^d$ be a bounded domain with $C^{1,1}$-boundary and $1<p<\infty$. Then there is a unique evolution system $\{T_{D,b}(t,s)\}_{(t,s)\in \Lambda}$ on $L^p_\sigma(D)$ with the following properties.
\begin{enumerate}
\item[(a)] For $(t,s)\in \widetilde{\Lambda}$, the operator $T_{D,b}(t,s)$ maps $L^p_\sigma(D)$ into $\D(A_{D,b})$.
\item[(b)] The map $t \mapsto T_{D,b}(t,s)$ is differentiable in $(s,\infty)$ with values in $\L(L^p_\sigma(D))$ and
\begin{equation}
\frac{\partial}{\partial t} T_{D,b}(t,s) = A_{D,b}(t)T_{D,b}(t,s).
\end{equation}
\item[(c)] For every $f \in \D(A_{D,b})$ and $t>0$, the map $s\mapsto T_{D,b}(t,s)f$ is differentiable in $[0,t)$ and
\begin{equation}
\frac{\partial}{\partial s} T_{D,b}(t,s)f = - T_{D,b}(t,s)A_{D,b}(s)f.
\end{equation}
\item[(d)] Let $T>0$. Then there exists a constant $C:=C(T)>0$ such that
\begin{equation}\label{eq45}
\|T_{D,b}(t,s)f\|_{p,D}\leq C\|f\|_{p,D},
\end{equation}
and
\begin{equation}\label{eq46}
\|T_{D,b}(t,s)f\|_{2,p,D}\leq C(t-s)^{-1}\|f\|_{p,D}.
\end{equation}
for all $f\in L^p_\sigma(D)$ and all $(t,s)\in \widetilde{\Lambda}_T$.
\end{enumerate}
\end{prop}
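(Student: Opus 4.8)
The plan is to reduce everything to the classical theory of parabolic evolution families. First I would observe that in the bounded domain $D$ the coefficients of the drift term $M(t)x\cdot\nabla$ are bounded functions of $x$ on $\overline D$, with $C^1$ dependence on $t$ coming from $M\in C^1([0,\infty),\R^{d\times d})$ and $c\in C^1([0,\infty),\R^d)$; the same is true for the perturbation $\mathcal B(t)u=-b(t,x)\cdot\nabla u-u\cdot\nabla b(t,x)$, since $b$ and $\nabla b$ are smooth and bounded on $\overline D$ (they are built from $\zeta$, $M(\cdot)x+c(\cdot)$ and the Bogovskii operator of Lemma~\ref{prop_Bog1}, all of which are regular on the bounded set $D$). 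Hence $\OU_{D,b}(t)=\Delta+(M(t)x+c(t))\cdot\nabla-M(t)\cdot+\mathcal B(t)$ is, for each fixed $t$, a lower-order bounded perturbation of the Laplacian, and therefore $A_{D,b}(t)=\P_D\OU_{D,b}(t)$ with domain $W^{2,p}(D)^d\cap W^{1,p}_0(D)^d\cap L^p_\sigma(D)$ is a lower-order perturbation of the Stokes operator on $L^p_\sigma(D)$. Since the Stokes operator generates a bounded analytic semigroup on $L^p_\sigma(D)$ for $1<p<\infty$ and bounded (relatively lower-order) perturbations preserve analyticity (this is exactly \cite[Proposition 3.1]{Geissert/Heck/Hieber:2006a}), each $A_{D,b}(s)$ generates an analytic $C_0$-semigroup on $L^p_\sigma(D)$, with $s$-independent domain $\D(A_{D,b})$.

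Next I would verify the regularity of $t\mapsto A_{D,b}(t)$ as an operator-valued map. Writing $A_{D,b}(t)-A_{D,b}(t_0)=\P_D\big[(M(t)-M(t_0))x\cdot\nabla+(c(t)-c(t_0))\cdot\nabla-(M(t)-M(t_0))\cdot+(\mathcal B(t)-\mathcal B(t_0))\big]$, every term on the right is a first-order (or zeroth-order) operator whose coefficients depend on $t$ through $M$, $c$, $b$, $\nabla b$ only; since these are $C^1$ in $t$ with values in $L^\infty(D)$ (uniformly on $\overline D$), the difference is $O(|t-t_0|)$ as a bounded operator from $\D(A_{D,b})$ (equipped with the $W^{2,p}$-graph norm) into $L^p_\sigma(D)$, and differentiable with locally bounded, continuous derivative. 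Thus $t\mapsto A_{D,b}(t)\in C^1(\R_+,\L(\D(A_{D,b}),L^p_\sigma(D)))$, which is the Kato–Tanabe / Acquistapace–Terreni-type regularity hypothesis (the constant-domain case, as in \cite[Chapter 6]{Lunardi:1995}, \cite[Section 2.3]{Grisvard}). Applying that theory yields a unique parabolic evolution system $\{T_{D,b}(t,s)\}_{(t,s)\in\Lambda}$ satisfying (a), (b) and (c): the smoothing into $\D(A_{D,b})$ for $t>s$, the forward differential equation $\partial_t T_{D,b}(t,s)=A_{D,b}(t)T_{D,b}(t,s)$ in $\L(L^p_\sigma(D))$, and the backward equation $\partial_s T_{D,b}(t,s)f=-T_{D,b}(t,s)A_{D,b}(s)f$ for $f\in\D(A_{D,b})$.

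Finally, for part (d): the estimate $\|T_{D,b}(t,s)f\|_{p,D}\le C(T)\|f\|_{p,D}$ on $\widetilde\Lambda_T$ is the standard uniform boundedness of a parabolic evolution family on compact time intervals, and $\|T_{D,b}(t,s)f\|_{2,p,D}\le C(T)(t-s)^{-1}\|f\|_{p,D}$ is the standard analytic-type smoothing estimate: it follows from the bound $\|A_{D,b}(t)T_{D,b}(t,s)\|_{\L(L^p_\sigma(D))}\le C(T)(t-s)^{-1}$ (uniform in $t$ because the family of generators has a common sector and uniformly bounded semigroup estimates on $[0,T]$), together with the fact that the graph norm of $A_{D,b}(t)$ is equivalent to the $W^{2,p}(D)$-norm on $\D(A_{D,b})$ uniformly for $t\in[0,T]$ (again using boundedness of the coefficients of the lower-order terms and elliptic regularity for the Stokes system on the $C^{1,1}$ domain $D$). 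I expect the only point requiring genuine care — rather than citation — to be this last uniform equivalence of norms and the uniform-in-$t$ resolvent/smoothing bounds: one must check that all the constants coming from the sectoriality estimates and the elliptic regularity estimates can be taken uniform over the compact interval $[0,T]$, which is where the continuity (not merely measurability) of $t\mapsto M(t),c(t)$ and hence of the coefficients is used. Everything else is a direct invocation of \cite{Geissert/Heck/Hieber:2006a}, \cite{Lunardi:1995}, \cite{Grisvard} and Lemma~\ref{prop_Bog1}.
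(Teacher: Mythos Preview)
Your proposal is correct and follows essentially the same approach as the paper: the paper's justification is the brief paragraph preceding the proposition, which notes that on a bounded domain the coefficients of $M(t)x\cdot\nabla$ are bounded, that by \cite[Proposition 3.1]{Geissert/Heck/Hieber:2006a} each $A_{D,b}(s)$ generates an analytic semigroup, that $t\mapsto A_{D,b}(t)\in C^1(\R_+,\L(\D(A_{D,b}),L^p_\sigma(D)))$, and then invokes the parabolic evolution system theory of \cite[Chapter 6]{Lunardi:1995} and \cite[Section 2.3]{Grisvard}. Your write-up simply spells out these steps with more care (in particular the uniform-in-$t$ equivalence of the graph norm with the $W^{2,p}$-norm needed for \eqref{eq46}), but the route is the same.
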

For the following first estimate let us recall the Gagliardo-Nierenberg inequality
\begin{equation}\label{G-N}
\|f\|_{k,q,D}\le C \|f\|_{m,p,D}^\theta \|f\|_{r,D}^{(1-\theta)},\quad \hbox{\ for all }f\in W^{m,p}(D),
\end{equation}
where $p\ge 1,\,q\ge 1,\,r\ge 1,\,0<\theta \le 1$ and $k-\frac{d}{q}\le \theta\left(m-\frac{d}{p}\right)-(1-\theta)\frac{d}{r}.$
%Applying (\ref{eq46}) and (\ref{G-N}) with $\theta=\frac{d}{2}\left(\frac{1}{p}-\frac{1}{q}\right)$ and
%$\theta=\frac{d}{2}\left(\frac{1}{p}-\frac{1}{q}\right)+\frac{1}{2}$ we obtain the following first two estimates respectively. The %third estimate follows from (\ref{eq45})-(\ref{eq46}) and simple interpolation
%(cf. \cite[Corollary 2.7]{Hansel/Rhandi:2010}). For the last assertions we refer, for example, to \cite[Corollary %6.1.8]{Lunardi:1995}.
%
\begin{corollary}\label{prop:Lp_Lq_estimates_bounded}
Let $T>0$ and $1< p\leq q < \infty$. Then there exists a constant $C:=C(T)>0$ such that for every $f\in L^p_\sigma(D)$
and $(t,s)\in \widetilde{\Lambda}_T$
\begin{enumerate}
\item[(a)] $\|T_{D,b}(t,s)f\|_{q,D} \leq C(t-s)^{-\frac{d}{2}\left(\frac 1 p - \frac 1 q\right)}\|f\|_{p,D},$
\item[(b)] $\|\nabla T_{D,b}(t,s)f\|_{p,D} \leq C(t-s)^{-\frac{1}{2}}\|f\|_{p,D},$
%\item[(b)]  $\|\nabla^2 P(t,s)f\|_p \leq C(t-s)^{-1}\|f\|_p, \qquad 0\leq s < t \leq T$, \vspace{0.2cm}
\item[(c)]  $\|\nabla T_{D,b}(t,s)f\|_{q,D} \leq C(t-s)^{-\frac{d}{2}\left(\frac 1 p - \frac 1 q\right)-\frac 1 2}\|f\|_{p,D}$.
\end{enumerate}
Moreover, $$\|T_{D,b}(t,s)f\|_{k,p}\le C\|f\|_{k,p},\quad (t,s)\in \Lambda_T,$$
for all $f\in W^{k,p}(D)^d\cap L^p_\sigma(D),\,k=1,\,2,$ and
$$\|T_{D,b}(t,s)f\|_{2,p}\le C(t-s)^{-\frac{1}{2}}\|f\|_{1,p},\quad (t,s)\in \widetilde{\Lambda}_T,$$
for all $f\in W^{1,p}(D)^d\cap L^p_\sigma (D)$.
\end{corollary}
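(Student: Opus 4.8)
The plan is to derive all estimates in Corollary \ref{prop:Lp_Lq_estimates_bounded} from the two basic bounds \eqref{eq45} and \eqref{eq46} of Proposition \ref{prop:evolution_bounded} together with the Gagliardo--Nirenberg inequality \eqref{G-N} and a semigroup-type factorization. First I would prove (b): interpolate between \eqref{eq45} and \eqref{eq46} using \eqref{G-N} with $k=1$, $m=2$, $r=p$, $q=p$ and $\theta=\tfrac12$ (so that $1-\tfrac dp \le \tfrac12(2-\tfrac dp)-\tfrac12\cdot\tfrac dp$, which holds with equality), obtaining
\begin{equation*}
\|\nabla T_{D,b}(t,s)f\|_{p,D} \le C\|T_{D,b}(t,s)f\|_{2,p,D}^{1/2}\|T_{D,b}(t,s)f\|_{p,D}^{1/2} \le C(t-s)^{-1/2}\|f\|_{p,D}.
\end{equation*}
For (a), I would use the Sobolev embedding $W^{2,p}(D)\hookrightarrow L^q(D)$ when $2-\tfrac dp \ge -\tfrac dq$, i.e.\ when $q$ is not too large relative to $p$, combined with \eqref{eq46}; for larger $q$ one iterates. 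More uniformly, the cleanest route is to use \eqref{G-N} with $k=0$, $m=2$, $r=p$ and $\theta = \tfrac d2(\tfrac1p-\tfrac1q)$ (note $0<\theta\le1$ precisely in a first range of $q$), yielding
\begin{equation*}
\|T_{D,b}(t,s)f\|_{q,D}\le C\|T_{D,b}(t,s)f\|_{2,p,D}^{\theta}\|T_{D,b}(t,s)f\|_{p,D}^{1-\theta}\le C(t-s)^{-\theta}\|f\|_{p,D},
\end{equation*}
which is exactly (a). To reach \emph{all} $q<\infty$ (where $\theta$ would exceed $1$), one splits the interval $[s,t]$ at its midpoint $\tau=\tfrac{s+t}{2}$, writes $T_{D,b}(t,s)=T_{D,b}(t,\tau)T_{D,b}(\tau,s)$ via the evolution-system property, and bootstraps: apply the already-known $L^p$--$L^{q_1}$ bound on $[s,\tau]$ with $q_1$ as large as the constraint $\theta\le1$ allows, then the $L^{q_1}$--$L^q$ bound on $[\tau,t]$; finitely many steps cover any $q$, and since $(t-\tau)=(\tau-s)=\tfrac12(t-s)$ the powers of $(t-s)$ add up correctly.

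For (c) I would combine (a) and (b) with the same composition trick: by (a) applied on $[s,\tau]$, $T_{D,b}(\tau,s)f\in L^q_\sigma(D)$ with $\|T_{D,b}(\tau,s)f\|_{q,D}\le C(t-s)^{-\frac d2(\frac1p-\frac1q)}\|f\|_{p,D}$, and then by (b) applied on $[\tau,t]$ in the space $L^q$, $\|\nabla T_{D,b}(t,\tau)g\|_{q,D}\le C(t-s)^{-1/2}\|g\|_{q,D}$; chaining gives (c). (Here one needs (b) to hold with $p$ replaced by an arbitrary exponent in $(1,\infty)$, which is immediate since $D$ and the hypotheses are symmetric in the choice of integrability exponent, or alternatively one re-runs the interpolation argument at exponent $q$.) The three displayed ``Moreover'' estimates follow similarly: $\|T_{D,b}(t,s)f\|_{k,p}\le C\|f\|_{k,p}$ for $k=1,2$ is the statement that the evolution system is bounded on $W^{k,p}(D)^d\cap L^p_\sigma(D)$ uniformly on $\Lambda_T$, which one gets from Proposition \ref{prop:evolution_bounded}(b)--(c) by differentiating $T_{D,b}(t,s)f$ in $t$, or more directly from the smoothing estimate \eqref{eq46} used as a bound $W^{2,p}\to W^{2,p}$ after absorbing one time-derivative; and $\|T_{D,b}(t,s)f\|_{2,p}\le C(t-s)^{-1/2}\|f\|_{1,p}$ is again \eqref{G-N}/interpolation between the $W^{1,p}\to W^{1,p}$ boundedness and \eqref{eq46}, writing $T_{D,b}(t,s)=T_{D,b}(t,\tau)T_{D,b}(\tau,s)$, using $L^p\to W^{2,p}$ with loss $(t-s)^{-1}$ on the second factor and $W^{1,p}\to L^p$ boundedness on the first, then interpolating the resulting $(t-s)^{-1}\|f\|_{0,p}$ with $\|f\|_{1,p}$-boundedness into $W^{2,p}$.

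The main obstacle I anticipate is the range restriction $0<\theta\le1$ in the Gagliardo--Nirenberg inequality: a single application of \eqref{G-N} only covers $q$ with $\tfrac d2(\tfrac1p-\tfrac1q)\le 1$, so the genuinely necessary device is the semigroup composition $T_{D,b}(t,s)=T_{D,b}(t,\tau)T_{D,b}(\tau,s)$ together with an induction on the number of subdivisions, being careful that the accumulated time-singularities exponents sum exactly to $\tfrac d2(\tfrac1p-\tfrac1q)$ (resp.\ that power $+\tfrac12$) and that the constant $C$, while depending on $T$ and on the fixed finite number of steps, stays finite. A secondary point to keep clean is that (b) and (c) require the basic bounds at integrability exponents other than the given $p$; since $D$ is bounded with $C^{1,1}$-boundary and the coefficients are bounded, Proposition \ref{prop:evolution_bounded} applies verbatim with $p$ replaced by any $r\in(1,\infty)$, so invoking it at exponent $q$ is legitimate and should be stated explicitly.
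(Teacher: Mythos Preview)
Your approach for (a), (b), (c) is essentially identical to the paper's: Gagliardo--Nirenberg interpolation between \eqref{eq45} and \eqref{eq46} with the stated parameters, followed by the evolution-system factorization $T_{D,b}(t,s)=T_{D,b}(t,\tfrac{s+t}{2})T_{D,b}(\tfrac{s+t}{2},s)$ and iteration to cover $q$ beyond the range $\theta\le 1$; the paper does exactly this, and in the same order (first (a) via G--N with $\theta=\tfrac d2(\tfrac1p-\tfrac1q)$, then iterate, then (b) with $\theta=\tfrac12$, then (c) by composing (a) and (b)).

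For the two ``Moreover'' estimates, however, the paper gives no argument at all and simply refers to \cite[Corollary 6.1.8]{Lunardi:1995}, i.e.\ to the abstract constant-domain parabolic evolution theory, which directly yields boundedness of $T_{D,b}(t,s)$ on $\D(A_{D,b})=W^{2,p}\cap W^{1,p}_0\cap L^p_\sigma$ and the intermediate estimate by interpolation. Your sketches for these are the weakest part of the proposal: the $W^{k,p}$-boundedness does not follow from ``differentiating $T_{D,b}(t,s)f$ in $t$'' nor from \eqref{eq46} ``used as a bound $W^{2,p}\to W^{2,p}$ after absorbing one time-derivative'', and your composition argument for $\|T_{D,b}(t,s)f\|_{2,p}\le C(t-s)^{-1/2}\|f\|_{1,p}$ is garbled as written. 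The cleanest route is the one the paper takes: cite the standard parabolic result (constant domain, $C^1$ dependence of $A_{D,b}(t)$ on $t$) and obtain the $W^{1,p}\to W^{2,p}$ estimate by interpolating between $\|T_{D,b}(t,s)\|_{\L(L^p,W^{2,p})}\le C(t-s)^{-1}$ and $\|T_{D,b}(t,s)\|_{\L(W^{2,p},W^{2,p})}\le C$.
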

\begin{proof}
Let us assume first that $0<\frac{1}{p}-\frac{1}{q}\le \frac{2}{d}$. Then $0<\frac{d}{2}\left(\frac{1}{p}-\frac{1}{q}\right)\le 1$.
Hence (a) follows by applying (\ref{G-N}) with $\theta=\frac{d}{2}\left(\frac{1}{p}-\frac{1}{q}\right),\,k=0,\,m=2,\,r=p$ and
(\ref{eq46}).\\
Assume now that $\frac{2}{d}<\frac{1}{p}-\frac{1}{q}\le \frac{4}{d}$. Set $\frac{1}{r}=\frac{1}{q}+\frac{2}{d}$. Then
$0<\frac{1}{p}-\frac{1}{r}\le \frac{2}{d}$. So, by the first step and (\ref{G-N}), we obtain
\begin{eqnarray*}
\|T_{D,b}(t,s)f\|_{q,D} &=& \left\|T_{D,b}\left(t,s+\tfrac{t-s}{2}\right)T_{D,b}\left(s+\tfrac{t-s}{2},s\right)f\right\|_{q,D}\\
&\le & C\left(\frac{t-s}{2}\right)^{-\frac{d}{2}\left(\frac 1 r - \frac 1 q\right)}\left\|T_{D,b}\left(s+\tfrac{t-s}{2},s\right)f\right\|_{r,D}\\
&\le & C\left(\frac{t-s}{2}\right)^{-\frac{d}{2}\left(\frac 1 r - \frac 1 q\right)}\left(\frac{t-s}{2}\right)^{-\frac{d}{2}\left(\frac 1 p - \frac 1 r\right)}\|f\|_{p,D}\\
&\le & C(t-s)^{-\frac{d}{2}\left(\frac 1 p - \frac 1 q\right)}\|f\|_{p,D},\quad (t,s)\in \widetilde{\Lambda}_T.
\end{eqnarray*}
By iterating this argument we obtain (a) also for $\frac{1}{p}-\frac{1}{q}>\frac{2}{d}$.
\\
Assertion (b) follows from (\ref{G-N}) with $\theta=\frac{1}{2},\,p=q=r,\,k=1,\,m=2$ and (\ref{eq46}).
\\
It follows from (a) and (b) that
\begin{eqnarray*}
\|\nabla T_{D,b}(t,s)f\|_{q,D} &=& \left\| \nabla T_{D,b}\left(t,s+\tfrac{t-s}{2}\right)T_{D,b}\left(s+\tfrac{t-s}{2},s\right)f\right\|_{q,D}\\
&\le & C\left(\frac{t-s}{2}\right)^{-\frac{1}{2}}\left\|T_{D,b}\left(s+\tfrac{t-s}{2},s\right)f\right\|_{q,D}\\
&\le & C(t-s)^{-\frac{d}{2}\left(\frac 1 p - \frac 1 q\right)-\frac 1 2}\|f\|_{p,D}, \quad (t,s)\in \widetilde{\Lambda}_T,
\end{eqnarray*}
and this proves (c).
%Let us start with the case $q\geq p \geq d/2$. Then, by the Gagliardo-Nierenberg inequality (cf. \cite[Theorem 3.3]{Tanabe:1997}) and %Proposition \ref{prop:evolution_bounded} (d), we immediately obtain
%\begin{align*}
%\|T_{D,b}(t,s)f\|_{q} &\leq C \|\nabla^2 T_{D,b}(t,s)f\|_{p}^a \|T_{D,b}(t,s)f\|_{p}^{1-a} \leq C(t-s)^{-a}\|f\|_{p},
%\end{align*}
%where $a= \frac d 2 \left(\frac 1 p - \frac 1 q \right)$. The case $1<p<\frac d 2$ follows by iteration. Assertion (b) is also proved %by the
%Gagliardo-Nierenberg inequality. By setting $a=\frac 1 2$ and $p=q$ we obtain
%\begin{align*}
%\|\nabla T_{D,b}(t,s)f\|_{p} &\leq C \|\nabla^2 T_{D,b}(t,s)f\|_{p}^{\frac 1 2 } \|T_{D,b}(t,s)f\|_{p}^{\frac 1 2 } \leq %C(t-s)^{-\frac 1 2}\|f\|_{p}.
%\end{align*}

For the last assertions we refer, for example, to \cite[Corollary 6.1.8]{Lunardi:1995}.
\end{proof}
By Proposition \ref{prop:evolution_bounded} the solution to the problem
\begin{equation}\label{eq:problem_bounded}
\left\{\begin{array}{rcll}
u_t - L_{D,b}(t)u + \nabla \p &=& 0, & \quad \mbox{in}\; (s,\infty)\times D ,\\
\div u &=& 0,&  \quad \mbox{in}\; (s,\infty)\times D ,\\
u &=& 0, & \quad \mbox{on}\; (s,\infty)\times \partial D ,\\
u(s,\cdot) &=&f, &  \quad \mbox{in}\; D,
\end{array}\right.
\end{equation}
for $s\geq 0$, is given by $u(t,x)=T_{D,b}(t,s)f(x)$. From \eqref{eq:problem_bounded} and the fact that $u_t \in L^p_\sigma(D)$ it follows that $\nabla \p(t) = (\mathrm{Id}-\P_D)L_{D,b}(t)u(t)$.  Since the pressure is only unique up to an additive constant, in the case of a bounded domain we can always assume that
$$
\p \in L^p_0(D) :=\left\{u \in L^p(D): \int_D u \,\d x = 0\right\}.
$$
By the abstract theory of parabolic evolution systems it is clear that $u\in C^1((s,\infty); L^p_\sigma(D)) \cap C((s,\infty); \D(A_{D,b}))$. So in particular, we can conclude that $\nabla \p \in C((s,\infty),L^p(D)^d)$. By Poincar\'{e}'s inequality we can conclude that also $\p \in C((s,\infty),L^p(D))$.
\begin{lemma}\label{lem:pressure}
Let $1<p<\infty$ and $D\subset \R^d$ be a bounded domain with $C^{1,1}$-boundary. Let $(u,\p)$ be the unique solution of
\eqref{eq:problem_bounded} with $\p\in L^p_0(D)$ and let $\gamma \in (1+\frac 1 p,2)$. Then for $T>s$ there exists a
constant $C:=C(T,\gamma)>0$ with
\begin{equation*}
\|\p(t)\|_{p,D}\leq C(t-s)^{-\frac \gamma 2}\|f\|_{p,D}
\end{equation*}
for all $t\in (s,T)$.
\end{lemma}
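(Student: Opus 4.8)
The plan is to recover the pressure from the equation $\nabla\p(t) = (\mathrm{Id}-\P_D)L_{D,b}(t)u(t)$ and to estimate the right-hand side by exploiting the smoothing of the evolution system near $t=s$. The starting observation is that, since $u(t) = T_{D,b}(t,s)f$ is solenoidal and lies in $\D(A_{D,b})$ for $(t,s)\in\widetilde\Lambda$, the leading term $\Delta u(t)$ already has zero Helmholtz component in the Stokes scale: writing $A_{D}$ for the (time-independent-domain) Stokes operator with zero boundary data, one has $\P_D\Delta u = A_D u$ and $(\mathrm{Id}-\P_D)\Delta u = 0$ only in the solenoidal sense one must be careful about; the genuinely troublesome contributions are the lower-order terms $(M(t)x+c(t))\cdot\nabla u$, $-M(t)u$, and the perturbation $\mathcal B(t)u$. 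Since $D$ is bounded, all coefficients of these lower-order terms are bounded on $[0,T]$, so
\[
\|(\mathrm{Id}-\P_D)L_{D,b}(t)u(t)\|_{p,D} \le C\big(\|u(t)\|_{p,D} + \|\nabla u(t)\|_{p,D}\big),
\]
because $\mathrm{Id}-\P_D$ is bounded on $L^p(D)^d$ and the full $\Delta$-term combines with the zero-divergence structure to contribute at most a first-order quantity — more precisely, one should split $L_{D,b}(t)u = A_{D,b}(t)u + \nabla\p$ abstractly and instead estimate $\p$ directly via the elliptic problem $\Delta\p = \div\big((M(t)x+c(t))\cdot\nabla u - M(t)u + \mathcal B(t)u\big)$ with appropriate boundary condition, using the bounded-domain Poisson estimate $\|\p\|_{p,D}\le C\|\,\cdot\,\|_{-1+?,p}$.

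Concretely, the cleanest route: take the divergence of the first line of \eqref{eq:problem_bounded}. Since $\div u = 0$ and $\div u_t = 0$, we get $\Delta \p = \div\big(L_{D,b}(t)u\big) = \div\big((M(t)x+c(t))\cdot\nabla u - M(t)u + \mathcal B(t)u\big) + \div\Delta u$, and $\div\Delta u = \Delta\div u = 0$, so the right-hand side involves only first derivatives of $u$ (the terms $(M(t)x+c(t))\cdot\nabla u$ and $\mathcal B(t)u = -b\cdot\nabla u - u\cdot\nabla b$ are already first order, their divergence is second order — so this is not quite enough). The fix is to keep the pressure equation in divergence form: $\p$ solves a Neumann-type problem $\Delta\p = \div g$, $\partial_\nu\p = g\cdot\nu$ on $\partial D$ (up to boundary corrections coming from $\Delta u$), where $g := (M(t)x+c(t))\cdot\nabla u - M(t)u + \mathcal B(t)u \in L^p(D)^d$. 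By the $W^{-1,p}$–$L^p$ solvability of this Neumann problem on the bounded $C^{1,1}$ domain $D$ (standard elliptic regularity, giving $\p\in L^p_0(D)$ with $\|\p\|_{p,D}\le C\|g\|_{p,D}$, or with the sharper $H^{s,p}$ bound $\|\p\|_{s,p,D}\le C\|g\|_{p,D}$ for the full fractional scale), we reduce to estimating $\|g\|$ in a suitable norm.

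Now insert the smoothing estimates of Corollary \ref{prop:Lp_Lq_estimates_bounded}. We have $\|\nabla u(t)\|_{p,D} = \|\nabla T_{D,b}(t,s)f\|_{p,D}\le C(t-s)^{-1/2}\|f\|_{p,D}$ and $\|u(t)\|_{p,D}\le C\|f\|_{p,D}$, hence $\|g(t)\|_{p,D}\le C(t-s)^{-1/2}\|f\|_{p,D}$, which already yields $\|\p(t)\|_{p,D}\le C(t-s)^{-1/2}\|f\|_{p,D}$ — but the claimed exponent is $\gamma/2$ with $\gamma\in(1+\tfrac1p,2)$, i.e.\ strictly worse than $1/2$, so this crude bound is in fact stronger than required and the lemma would follow immediately. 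That cannot be the intended reading, so I expect the subtlety is that $\mathcal B(t)u$ or the full second-order term $\Delta u$ genuinely does enter: the correct pressure equation is $\nabla\p = (\mathrm{Id}-\P_D)\Delta u + (\mathrm{Id}-\P_D)g$, and $(\mathrm{Id}-\P_D)\Delta u$ need not vanish because $u$ satisfies only $u|_{\partial D}=0$, not $\partial_\nu u|_{\partial D}=0$. So one must estimate $(\mathrm{Id}-\P_D)\Delta u(t)$, and here the right tool is the interpolation-space bound: $\Delta u(t)\in H^{\gamma-2,p}$-type regularity with norm controlled via $\|u(t)\|_{\gamma,p,D}$, using $\|u(t)\|_{\gamma,p,D}\le C\|u(t)\|_{2,p,D}^{\gamma/2}\|u(t)\|_{p,D}^{1-\gamma/2}\le C(t-s)^{-\gamma/2}\|f\|_{p,D}$ by \eqref{eq45}–\eqref{eq46} and Gagliardo–Nirenberg. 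Since $\gamma>1+\tfrac1p$, the trace theorem makes $\partial_\nu u(t)|_{\partial D}$ well-defined in $W^{\gamma-1-1/p,p}(\partial D)$, and the Helmholtz/Leray projection of $\Delta u$ is controlled by exactly this trace together with $\|\Delta u\|$; combining gives $\|\p(t)\|_{p,D}\le C\|u(t)\|_{\gamma,p,D}\le C(t-s)^{-\gamma/2}\|f\|_{p,D}$.

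The main obstacle, then, is the precise functional-analytic identity for $(\mathrm{Id}-\P_D)\Delta u$ in the fractional scale: one needs that the pressure associated to the Stokes-type problem with a solenoidal right-hand side $u_t$ and homogeneous Dirichlet data for $u$ satisfies $\|\p\|_{p,D}\le C\|u\|_{\gamma,p,D}$ for $\gamma\in(1+\tfrac1p,2)$ — this is where the restriction $\gamma>1+\tfrac1p$ is forced (so the normal trace of $\nabla u$ lives in a positive-order boundary space and the elliptic/boundary estimate closes) and $\gamma<2$ is needed so that the interpolation exponent $\gamma/2<1$ makes $(t-s)^{-\gamma/2}$ integrable-type and genuinely better than the a priori $(t-s)^{-1}$ from \eqref{eq46}. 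So the steps are: (1) derive $\nabla\p = (\mathrm{Id}-\P_D)L_{D,b}(t)u$ and isolate the second-order part; (2) prove the elliptic estimate $\|\p\|_{p,D}\le C\|u\|_{\gamma,p,D}$ on the bounded $C^{1,1}$ domain using trace theorems and $L^p$-theory for the pressure Poisson problem; (3) interpolate between \eqref{eq45} and \eqref{eq46} via \eqref{G-N} to get $\|u(t)\|_{\gamma,p,D}\le C(t-s)^{-\gamma/2}\|f\|_{p,D}$; (4) absorb the lower-order $g$-terms, which are of lower order and hence give a milder power of $(t-s)$. I would expect step (2), the pressure regularity at fractional order, to be the crux, and would look to quote it from Stokes-operator domain characterizations (e.g.\ the $H^{\gamma,p}$-theory in the references already cited) rather than reprove it.
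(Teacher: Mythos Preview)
Your proposal, after several self-corrections, lands on the right structure: reduce to the elliptic estimate $\|\p(t)\|_{p,D}\le C\|u(t)\|_{\gamma,p,D}$, then interpolate between \eqref{eq45} and \eqref{eq46} to obtain $\|u(t)\|_{\gamma,p,D}\le C(t-s)^{-\gamma/2}\|f\|_{p,D}$. You also correctly pinpoint that the restriction $\gamma>1+\tfrac1p$ enters through the trace of $\nabla u$ on $\partial D$. Where you diverge from the paper is in step (2): you propose to quote it from fractional-domain characterizations of the Stokes operator, whereas the paper gives a short, self-contained duality argument. Namely, for a test function $\varphi\in C_c^\infty(D)$ one solves the Neumann problem $\Delta\psi=\varphi-|D|^{-1}\int_D\varphi$, $\partial_\nu\psi=0$, with $\|\psi\|_{2,p',D}\le C\|\varphi\|_{p',D}$; since $\p\in L^p_0(D)$ this gives $\langle\p,\varphi\rangle_D=-\langle\nabla\p,\nabla\psi\rangle_D=-\langle L_{D,b}(t)u,\nabla\psi\rangle_D$ (the projection $(\mathrm{Id}-\P_D)$ drops out because $\nabla\psi$ is a gradient). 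One integration by parts on the $\langle\Delta u,\nabla\psi\rangle$ term produces the boundary integral $\langle\nabla u\cdot\nu,\nabla\psi\rangle_{\partial D}$ plus interior terms involving only $\nabla u$ against $\nabla^2\psi$; the trace embedding $H^{1/p+\varepsilon,p}(D)\hookrightarrow L^p(\partial D)$ then yields $|\langle\p,\varphi\rangle_D|\le C\|u\|_{1+1/p+\varepsilon,p,D}\|\varphi\|_{p',D}$, i.e.\ exactly the estimate you wanted to cite. The advantage of the paper's route is that it needs only $W^{2,p'}$-solvability of the Neumann Laplacian and a standard trace theorem, avoiding any appeal to the finer $H^{\gamma,p}$-theory of the Stokes operator; your approach would work too, but locating the precise off-the-shelf statement (pressure bounded by $\|u\|_{\gamma,p}$ for the time-dependent perturbed operator $L_{D,b}(t)$) is less immediate than reproving it.
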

A similar estimate was proved in \cite[Lemma 3.5]{Geissert/Heck/Hieber:2006a} for the solution to the corresponding resolvent problem (see also \cite{Noll/Saal:2003} for the case of the Stokes operator). However, our proof follows the ideas in \cite{Shibata/Shimada:2007} (see also \cite[Section 4]{Shibata:2008}).
\begin{proof}[Proof of Lemma \ref{lem:pressure}]
Let $\varphi \in C_c^\infty(D)$ and set $\tilde \varphi = \varphi - |D|^{-1}\int_D \varphi\,\d x$. Then, in $L^{p'}(D)$ with
 $\frac{1}{p}+\frac{1}{p'}=1$, there exits a unique solution $\psi \in W^{2,p'}(D)$ of the Neumann problem
\begin{equation}\label{eq:Neumann}
\left\{\begin{array}{rcll}
\Delta \psi &=& \tilde\varphi &\mbox{in}\;D,\\
\nabla \psi \cdot\nu &=&0&\mbox{on}\; \partial D,
\end{array}\right.
\end{equation}
which satisfies the estimate
\begin{equation*}
\|\psi\|_{2,p',D}\leq C \|\tilde\varphi\|_{p',D}\le 2C\|\varphi \|_{p',D},
\end{equation*}
for some constant $C>0$ (cf. \cite[Proposition 5.5]{Shibata/Shimada:2007}). We obtain now
\begin{align*}
\langle \p, \varphi \rangle_D &= \langle \p, \tilde\varphi \rangle_D = \langle \p, \Delta\psi \rangle_D = -  \langle \nabla\p, \nabla\psi \rangle_D=-\langle (\mathrm{Id}-\P_D)L_{D,b}(t)u(t), \nabla\psi \rangle_D\\
&= -\langle \Delta u(t), \nabla\psi \rangle_D-\langle (M(t)x-c(t))\cdot \nabla u(t), \nabla\psi \rangle_D + \langle M(t)u(t), \nabla\psi \rangle_D\\
& \quad +\langle b\cdot \nabla u(t), \nabla \psi \rangle_D +\langle u(t)\cdot \nabla b, \nabla \psi \rangle_D \\
&= -\langle \nabla u(t)\cdot \nu,\nabla \psi \rangle_{\partial D}+ \langle \nabla u(t), \nabla^2\psi \rangle_D
-\langle (M(t)x-c(t))\cdot \nabla u(t), \nabla\psi \rangle_D \\
& \quad + \langle M(t)u(t), \nabla\psi \rangle_D
+\langle b\cdot \nabla u(t), \nabla \psi \rangle_D +\langle u(t)\cdot \nabla b, \nabla \psi \rangle_D.
\end{align*}
By using the embedding $H^{\frac 1 p + \eps, p}(D) \hookrightarrow L^p(\partial D)$ for $0<\eps \leq 1 - \frac 1 p$
(cf \cite[Theorem 1.5.1.2]{Grisvard}) we obtain
\begin{align*}
|\langle \p, \varphi \rangle_D| &\leq C \left( \|\nabla u \cdot \nu\|_{p,\partial D}  +\|\nabla u\|_{p,D}+\| u\|_{p,D}\right)\|\psi\|_{2,p',D}\\
&\leq C\|u \|_{1+\frac 1 p +\eps,p,D} \|\varphi\|_{p',D}.
\end{align*}
By taking now $\eps <1-\frac 1 p$ and $\gamma =1+\frac 1 p +\eps$, the assertion follows from Proposition \ref{prop:evolution_bounded}, Corollary \ref{prop:Lp_Lq_estimates_bounded} and simple interpolation .
\end{proof}
\section{The linearized problem in exterior domains}\label{sect:exterior}
In this section we prove Theorem \ref{thm:evolution_system_exterior}. The general idea is to derive the result for exterior domains from the corresponding results in the
case of $\R^d$ and bounded domains by
some cut-off techniques. For this purpose let $R>0$ be such that $\mathcal O \cup \mathrm{supp}\,b(\cdot,t) \subset B(R)$ for every $t>0$. We then set
\begin{align*}
D&=\Omega \cap B(R+8),\\
K_1 & = \Omega \cap B(R+2),\\
K_2 & = \{ x \in \Omega : R+2 < |x|<R+5\},\\
K_3 & = \{ x \in \Omega : R+5 < |x|<R+8\}.
\end{align*}
We denote by $\BB_i$ for $i\in\{1,2,3\}$ the operator defined in Lemma \ref{prop_Bog1} associated to the domain $K_i$. Moreover, by $\{T_{\R^d}(t,s)\}_{(t,s)\in\Lambda}$ we denote the evolution system in $L^p_\sigma(\R^d)$ from Proposition \ref{prop:evolution_whole} and by $\{T_{D,b}(t,s)\}_{(t,s)\in\Lambda}$
the evolution system in $L^p_\sigma(D)$ for the bounded domain $D$ from Proposition \ref{prop:evolution_bounded}.

Next, we choose cut-off functions $\varphi,\xi,\eta \in C^{\infty}(\R^d)$ such that $0\leq \varphi, \xi, \eta \leq 1$ and
$$
\varphi(x) :=
\left\{
\begin{array}{cc}
  1,&|x| \geq R+4,  \\
  0, & |x| \leq R+3,
\end{array}
\right.
$$
$$
\xi(x) :=
\left\{
\begin{array}{ll}
  1,&|x| \geq R+1,  \\
  0, & |x| \leq R,
\end{array}
\right.
$$
$$
\eta(x) :=
\left\{
\begin{array}{ll}
  1,&|x| \leq R+6,  \\
  0, & |x| \geq R+7.
\end{array}
\right.
$$
%It is immediately obvious that $(1-\varphi)\eta \equiv (1-\varphi)$ holds.
For a given $f\in L^p_{\sigma}(\Omega)$ we now define functions $f_R\in L^p_\sigma(\R^d)$ and $f_D\in L^p_\sigma(D)$, respectively, by
$$
f_R(x) :=
\left\{
\begin{array}{ll}
  \xi(x) f(x) - \BB_1((\nabla \xi)\cdot f)(x),&x\in\Omega,   \\
  0, & x\not\in \Omega,
\end{array}
\right.
$$
and
$$
f_D(x) := \eta(x)f(x) - \BB_3((\nabla \eta)\cdot f)(x), \qquad\quad x\in D.
$$
With
partial integration we see that for every $f\in \D(A_{\Omega,b}(t))$ we have $f_R\in \D(A_{\R^d}(t))$ and $f_D\in \D(A_{D,b}(t))$.

Now for $(t,s)\in\Lambda$, we define the operator $W(t,s)$ by  setting
\begin{equation*}
W(t,s)f := \varphi T_{\R^d}(t,s)f_R + (1-\varphi) T_{D,b}(t,s)f_D - \BB_2((\nabla\varphi)\cdot ( T_{\R^d}(t,s)f_R  -T_{D,b}(t,s)f_D))
\end{equation*}
for every $f\in L^p_\sigma(\Omega)$. By Lemma \ref{prop_Bog1} it is clear that $W(t,s)f \in L^p_\sigma(\Omega)$. Moreover, for $f
\in \D(A_{\Omega,b}(t))$ it follows from the properties of $\{T_{\R^d}(t,s)\}_{(t,s)\in\Lambda}$, $\{T_{D,b}(t,s)\}_{(t,s)\in\Lambda}$ and the operator $\BB_2$ that $W(t,s)f \in W^{2,p}(\Omega)\cap W^{1,p}_0(\Omega)\cap L^p_\sigma(\Omega)$
holds. Moreover, a short calculation yields
\begin{align*}
\nabla W(t,s)f & = \varphi \nabla T_{\R^d}(t,s)f_R + (1-\varphi) \nabla T_{D,b}(t,s)f_D \\
&\qquad + \nabla \varphi \left(T_{\R^d}(t,s)f_R - T_{D,b}(t,s)f_D\right)\\
&\qquad - \nabla \left[\BB_2((\nabla \varphi)\cdot (T_{\R^d}(t,s)f_R - T_{D,b}(t,s)f_D) )\right].
\end{align*}
Thus, it follows that $W(t,s)f \in \D(A_{\Omega,b}(t))$ if $f\in \D(A_{\Omega,b}(t))$.

Let us set $u_R(t):=T_{\R^d}(t,s)f_R$ and $u_D(t):=T_{D,b}(t,s)f_D$ and let $\p_D$ be the pressure that is associated
to $u_D$. We may assume that $\int_D \p_D \,\d x= 0$.

A short calculation yields that for some initial value $f\in Y_\Omega$, the function
%\begin{align*}
%\Delta W(t,s)f  & = \varphi \Delta T_{\R^d}(t,s)f_0 + (1-\varphi) \Delta T_{D}(t,s)f_D \\
%& \qquad + 2 \nabla \varphi \left( \nabla T_{\R^d}(t,s)f_0 + \nabla T_{D}(t,s)f_D\right) \\
%&\qquad + \Delta \varphi \left(T_{\R^d}(t,s)f_0 + T_{D}(t,s)f_D\right)\\
%&\qquad - \Delta \BB_1(\nabla \varphi (T_{\R^d}(t,s)f_0 - T_{D}(t,s)f_D) )
%\end{align*}
%and
%\begin{align*}
%\nabla W(t,s)f & = \varphi \nabla T_{\R^d}(t,s)f_0 + (1-\varphi) \nabla T_{D}(t,s)f_D \\
%&\qquad + \nabla \varphi \left(T_{\R^d}(t,s)f_0 + T_{D}(t,s)f_D\right)\\
%&\qquad - \nabla \BB_1(\nabla \varphi (T_{\R^d}(t,s)f_0 - T_{D}(t,s)f_D) ).
%\end{align*}
$u(t):=W(t,s) f$ solves the inhomogeneous equation
\begin{equation}\label{eq:inhom_error_terms}
\left\{\begin{array}{rcll}
u_t - L_{\Omega,b} (t)u +\nabla \p_\varphi &=&- \tilde f, & \quad \mbox{in}\; (s,\infty)\times \Omega ,\\
\div u &=& 0,&  \quad \mbox{in}\; (s,\infty)\times \Omega ,\\
u &=& 0, & \quad \mbox{on}\; (s,\infty)\times \partial\Omega ,\\
u(s,\cdot) &=&f, &  \quad \mbox{in}\; \Omega,
\end{array}\right.
\end{equation}
where $\nabla\p_\varphi := \nabla\left((1-\varphi)\p_D\right)$ and
\begin{align}\label{eq:error_terms}
\tilde f := F(t,s)f & := +2\left(  \nabla  T_{\R^d}(t,s)f_R - \nabla T_{D,b}(t,s)f_D \right)\cdot  (\nabla \varphi)^* \notag \\
& \qquad + (\Delta \varphi + (M(t)x + c(t))\cdot (\nabla \varphi))\left(T_{\R^d}(t,s)f_R - T_{D,b}(t,s)f_D\right)\notag\\
&\qquad +\BB_2((\nabla \varphi)\cdot (\partial_t T_{\R^d}(t,s)f_R  -\partial_t T_{D,b}(t,s)f_D)) \\
&\qquad -  L_{\Omega,b}(t)\BB_2((\nabla \varphi)\cdot (T_{\R^d}(t,s)f_R - T_{D,b}(t,s)f_D) ) \notag\\
&\qquad + (\nabla \varphi) \p_D \notag\\
&=: I_1 + I_2 + I_3 + I_4 + I_5.\notag
\end{align}
Here we use the fact that $\mathrm{supp}\,b(t,\cdot)\subset B(R)$ for every $t>0$ and the expression of $f_R$.

Certainly, the function $F(t,s)f$ in (\ref{eq:error_terms}) is well-defined for every $f\in L^p_\sigma (\Omega)$ and $(t,s)\in \widetilde
\Lambda$. Later, we need a certain decay of $F(t,s)$ in $t$, stated in the following lemma.
\begin{lemma}\label{lemma:error_terms}
Let $1<p<\infty$. For every $f\in L^p_\sigma (\Omega)$ we have
\begin{equation*}
F(\cdot,\cdot)f \in C(\widetilde \Lambda; L^p(\Omega)^d).
\end{equation*}
Moreover, let $T>0$ be fixed and let $\gamma\in(1+\frac 1 p, 2)$. Then
\begin{equation}\label{eq:norm_estimate_error}
\|F(t,s)f\|_{p,\Omega}\leq C  (t-s)^{-\frac \gamma 2} \|f\|_{p,\Omega}, \qquad (t,s)\in \widetilde\Lambda_T,
%\left\{\begin{array}{ll}
%C(t-s)^{\frac{d}{2q}},& \quad t > s+2,\\
%C(t-s)^{\frac{1}{2}},& \quad t \leq s+2,
%\end{array}\right.
\end{equation}
for  some constant $C:=C(T,\gamma)>0$.
\end{lemma}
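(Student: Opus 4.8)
The plan is to estimate each of the five terms $I_1,\dots,I_5$ in \eqref{eq:error_terms} separately, exploiting that all of them are supported in the annulus $K_2 = \{R+3 \le |x| \le R+4\}$ (by the properties of $\nabla\varphi$ and $\BB_2$), hence it suffices to bound them in $L^p(K_2)$; on this fixed bounded region all norms $\|\cdot\|_{p,K_2}$, $\|\cdot\|_{1,p,K_2}$ interact comfortably with the $L^p$-$L^q$ and gradient estimates already available. The continuity statement $F(\cdot,\cdot)f\in C(\widetilde\Lambda;L^p(\Omega)^d)$ follows once we observe that $t\mapsto T_{\R^d}(t,s)f_R$ and $t\mapsto T_{D,b}(t,s)f_D$ are continuous into $W^{2,p}$ on $\widetilde\Lambda$ (Propositions \ref{prop:evolution_whole}, \ref{prop:evolution_bounded}), that $\partial_t T_{\R^d}$ and $\partial_t T_{D,b}$ are continuous into $L^p_\sigma$ off the diagonal, that $\p_D\in C(\widetilde\Lambda;L^p(D))$ by Lemma \ref{lem:pressure} and the discussion preceding it, and that $\BB_2$ and $L_{\Omega,b}(t)$ act continuously between the relevant spaces (the coefficients $M(t)x+c(t)$ being bounded on $K_2$). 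So the substance is the quantitative bound \eqref{eq:norm_estimate_error}.

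For the individual terms I would argue as follows. Term $I_2$ involves $T_{\R^d}(t,s)f_R - T_{D,b}(t,s)f_D$ multiplied by the bounded function $\Delta\varphi + (M(t)x+c(t))\cdot\nabla\varphi$ (bounded uniformly for $t\in[0,T]$ on $\mathrm{supp}\,\nabla\varphi$); using $\|T_{\R^d}(t,s)f_R\|_{p,\Omega} \le C\|f\|_{p,\Omega}$ from \eqref{eq:LpLqsmoothing} with $p=q$ and $\|T_{D,b}(t,s)f_D\|_{p,D}\le C\|f\|_{p,\Omega}$ from \eqref{eq45}, together with $\|f_R\|_{p,\R^d} + \|f_D\|_{p,D}\le C\|f\|_{p,\Omega}$ (boundedness of $\xi,\eta$ and of $\BB_1,\BB_3$ on $L^p$, Lemma \ref{prop_Bog1}(b)), this is bounded by $C\|f\|_{p,\Omega}$, which is dominated by $C(t-s)^{-\gamma/2}\|f\|_{p,\Omega}$ on $\widetilde\Lambda_T$ since $(t-s)^{-\gamma/2}\ge (2T)^{-\gamma/2}>0$ there. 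Term $I_1$ involves gradients: $\|\nabla T_{\R^d}(t,s)f_R\|_{p}\le C(t-s)^{-1/2}\|f\|_{p,\Omega}$ from \eqref{eq:GradientEstimate} and $\|\nabla T_{D,b}(t,s)f_D\|_{p,D}\le C(t-s)^{-1/2}\|f\|_{p,\Omega}$ from Corollary \ref{prop:Lp_Lq_estimates_bounded}(b), giving an $(t-s)^{-1/2}$ bound; since $\frac12 < \frac\gamma2$, this is absorbed into $(t-s)^{-\gamma/2}$ on the bounded time interval. Term $I_5 = (\nabla\varphi)\p_D$ is estimated directly by Lemma \ref{lem:pressure}: $\|I_5\|_{p,\Omega}\le C\|\p_D(t)\|_{p,D}\le C(t-s)^{-\gamma/2}\|f_D\|_{p,D}\le C(t-s)^{-\gamma/2}\|f\|_{p,\Omega}$ — and this is precisely the term that forces the exponent $\gamma/2$ in the statement, so it dictates the final rate.

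The remaining terms $I_3$ and $I_4$, containing the Bogovskii operator $\BB_2$, are the main obstacle, because they involve the \emph{time derivatives} $\partial_t T_{\R^d}(t,s)f_R$ and $\partial_t T_{D,b}(t,s)f_D$, which a priori blow up like $(t-s)^{-1}$ near the diagonal — too fast for the desired $\gamma/2<1$ rate. The trick is not to estimate these derivatives in $L^p$ but to pair with the negative-order smoothing of $\BB_2$: write $\partial_t T_{\R^d}(t,s)f_R = L_{\R^d}(t)T_{\R^d}(t,s)f_R$ and $\partial_t T_{D,b}(t,s)f_D = A_{D,b}(t)T_{D,b}(t,s)f_D + \nabla\p_D$, and integrate by parts so that the derivatives fall on $\BB_2$ — equivalently, use $\BB_2$ as a map $W^{-1,p}_0(K_2)\to L^p(K_2)^d$ (Lemma \ref{prop_Bog1}(b) with $k=-1>-2+\frac1p$) applied to $(\nabla\varphi)\cdot\partial_t(\cdots)$, which can be written as a divergence of an $L^p$ quantity plus lower order. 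Concretely, since $(\nabla\varphi)\cdot\partial_t T = \partial_t((\nabla\varphi)\cdot T) = \div\big((\nabla\varphi)\,\partial_t\!\int\!\cdots\big)$ is not quite right, the cleaner route is: the difference $w(t):= T_{\R^d}(t,s)f_R - T_{D,b}(t,s)f_D$ satisfies a parabolic-type equation whose right-hand side already has the structure making $\|\partial_t((\nabla\varphi)\cdot w)\|_{W^{-1,p}(K_2)} \le C\big(\|w(t)\|_{1,p,K_2} + \|\p_D(t)\|_{p,D}\big)$, and then $\BB_2$ maps this into $L^p$ with the same bound; for $I_4$ one similarly uses that $L_{\Omega,b}(t)\BB_2(\cdots)$ costs two derivatives, compensated by writing $\BB_2$ as a map into $W^{2,p}_0$ when the argument already lies in $W^{1,p}_0$, i.e. $\BB_2:W^{1,p}_0(K_2)\to W^{2,p}_0(K_2)^d$, applied to $(\nabla\varphi)\cdot w(t)$, whose $W^{1,p}$-norm is controlled by $\|w(t)\|_{1,p,K_2}\le C(t-s)^{-1/2}\|f\|_{p,\Omega}$. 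In all cases the worst surviving rate is $(t-s)^{-1/2}$ from the gradient estimates plus the genuine $(t-s)^{-\gamma/2}$ from the pressure $\p_D$ in Lemma \ref{lem:pressure}, and since $\gamma\in(1+\frac1p,2)$ gives $\frac\gamma2\in(\frac12+\frac1{2p},1)$, the pressure term is the dominant one. Summing the five estimates and using $(t-s)^{-1/2}\le C_T(t-s)^{-\gamma/2}$ on $\widetilde\Lambda_T$ yields \eqref{eq:norm_estimate_error}.
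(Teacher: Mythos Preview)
Your proposal is correct and follows essentially the same approach as the paper: treat $I_1,I_2,I_5$ directly via the $L^p$-$L^q$ and gradient estimates together with Lemma~\ref{lem:pressure}, handle $I_4$ through $\BB_2:W^{1,p}_0(K_2)\to W^{2,p}_0(K_2)^d$ followed by $L_{\Omega,b}(t)$, and for $I_3$ rewrite the time derivatives via the equations $\partial_t u_R = L_{\R^d}(t)u_R$, $\partial_t u_D = L_{D,b}(t)u_D - \nabla\p_D$ and exploit $\BB_2:W^{-1,p}_0(K_2)\to L^p(K_2)^d$ after a duality (integration-by-parts) estimate of $(\nabla\varphi)\cdot L(t)u$ against test functions. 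Your slightly tangled discussion of $I_3$ nevertheless captures exactly the paper's mechanism, with the pressure contribution from Lemma~\ref{lem:pressure} producing the dominant rate $(t-s)^{-\gamma/2}$.
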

\begin{proof}
Let us start with the norm estimates for $I_1$ and $I_2$.
By using Proposition \ref{prop:evolution_whole} and Corollary \ref{prop:Lp_Lq_estimates_bounded} we obtain
\begin{equation*}
\|I_1\|_{p,\Omega} \leq C (t-s)^{-\frac 1 2} \|f\|_{p,\Omega} \qquad \mbox{and} \qquad \|I_2\|_{p,\Omega} \leq C \|f\|_{p,\Omega}.
\end{equation*}
In order to estimate the norm of $I_4$, let us first note that Lemma \ref{prop_Bog1} implies that $L_{\Omega,b}(t)\BB_2 \in \L(W^{1,p}_0 (K_2), L^p(K_2))$. Thus again by using Proposition \ref{prop:evolution_whole} and Corollary \ref{prop:Lp_Lq_estimates_bounded} we obtain
\begin{equation*}
\|I_4\|_{p,\Omega} \leq C \|f\|_{p,\Omega}.
\end{equation*}
We come now to the term $I_3$. At first we note that we can write
\begin{align*}
&\BB_2((\nabla \varphi)\cdot (\partial_t u_R(t)  -\partial_t u_D(t))) \\
& \qquad = \BB_2((\nabla \varphi)\cdot (L_{\R^d}(t) u_R(t)))  -\BB_2((\nabla\varphi)\cdot (L_{D,b}(t) u_D(t) - \nabla \p_D)).
\end{align*}
Now, for a test function $\psi \in C_c^\infty (\R^d)$, we have
\begin{align*}
&|\langle(\nabla \varphi)\cdot L_{\R^d}(t) u_R(t), \psi\rangle| = |\langle L_{\R^d}(t) u_R(t), \psi (\nabla \varphi) \rangle|\\
&\qquad \quad \leq |\langle \nabla u_R(t), \nabla \left(\psi (\nabla \varphi)\right)  \rangle| +  |\langle (M(t)x+c(t)) \cdot \nabla u_R(t) - M(t)u_R(t), \psi (\nabla \varphi) \rangle|\\
&\qquad \quad \leq C \|u_R(t)\|_{1,p}\|\psi\|_{1,p'} + C \|u_R(t)\|_{1,p,K_2}\|\psi\|_{p',K_2}\\
&\qquad \quad \leq  C   \|u_R(t)\|_{1,p}\|\psi\|_{1,p'},
\end{align*}
where $\frac 1 p + \frac{1}{p'}=1$.  This shows that
\begin{equation*}
\|(\nabla \varphi)\cdot L_{\R^d}(t) u_R(t)\|_{W^{-1,p}(\R^d)}\leq  C   \|u_R(t)\|_{1,p}
\end{equation*}
holds. Analogously, we obtain
\begin{equation*}
\|(\nabla \varphi)\cdot L_{D,b}(t) u_D(t)\|_{W^{-1,p}(D)}\leq  C   \left(\|u_D(t)\|_{1,p,D} + \|\p_D\|_{p,D}\right).
\end{equation*}
Now, Lemma \ref{prop_Bog1} together with Proposition \ref{prop:evolution_whole}, Corollary \ref{prop:Lp_Lq_estimates_bounded} and Lemma \ref{lem:pressure} yield
\begin{equation*}
\|I_3\|_{p,\Omega} \leq C (t-s)^{-{\frac \gamma 2}} \|f\|_{p,\Omega}.
\end{equation*}
From Lemma \ref{lem:pressure} we can conclude that
\begin{equation*}
\|I_5\|_{p,\Omega} \leq C(t-s)^{-\frac \gamma 2}\|f\|_{p,\Omega}.
\end{equation*}
This proves \eqref{eq:norm_estimate_error}.

The continuity of the map $\widetilde{\Lambda}\ni (t,s)\mapsto F(t,s)f$ follows from the strong continuity of $T_{\R^d}(\cdot,\cdot)$ and $T_{D,b}(\cdot,\cdot)$, the continuity of the pressure $\p_D(\cdot)$ together with the properties of the operator $\BB_1$ stated in Lemma \ref{prop_Bog1}.
\end{proof}
Applying $\P_\Omega$ to (\ref{eq:inhom_error_terms}) we have
\begin{equation}\label{eq:inhom_error_terms+}
\left\{\begin{array}{rcll}
u_t &=&  A_{\Omega,b} (t)u - \P_\Omega \tilde f, & \quad \mbox{in}\; (s,\infty)\times \Omega ,\\
\div u &=& 0,&  \quad \mbox{in}\; (s,\infty)\times \Omega ,\\
u &=& 0, & \quad \mbox{on}\; (s,\infty)\times \partial\Omega ,\\
u(s,\cdot) &=&f, &  \quad \mbox{in}\; \Omega .
\end{array}\right.
\end{equation}
It is clear, that if an evolution system $\{T_{\Omega,b}(t,s)\}_{(t,s)\in\Lambda}$ exists on $L^p_\sigma(\Omega)$, then the solution $u(t)$ to the inhomogeneous equation (\ref{eq:inhom_error_terms+}) is given by the variation of constant formula
\begin{equation}\label{eq:integral}
u(t) = T_{\Omega,b}(t,s)f - \int_s^t T_{\Omega,b}(t,r) \P_\Omega F(r,s)f \d r.
\end{equation}
The integral in \eqref{eq:integral} exists because of Lemma \ref{lemma:error_terms}. This consideration suggests to consider the integral equation
\begin{equation}\label{eq:integral_eq}
T_{\Omega,b}(t,s)f = W(t,s)f + \int_s^t T_{\Omega,b}(t,r) \P_\Omega F(r,s)f \d r \qquad (t,s)\in\Lambda, f\in L^p_\sigma(\Omega).
\end{equation}
Let us now state a lemma which will be very useful in the proof of Theorem \ref{thm:evolution_system_exterior}. For the proof we refer to \cite{Hansel/Rhandi:2010}.
\begin{lemma}\label{lemma:iterated}
Let $X_1$ and $X_2$ be two Banach spaces, $T>0$ and let $R:\widetilde{\Lambda}_T \to \L(X_2,X_1)$ and $S:\widetilde{\Lambda}_T \to \L(X_2)$ be strongly continuous functions. Assume that
\begin{equation*}
\|R(t,s)\|_{\L(X_2,X_1)} \leq C_0(t-s)^{\alpha}, \quad \|S(t,s)\|_{\L(X_2)} \leq C_0(t-s)^{\beta}, \quad (t,s)\in \widetilde{\Lambda}_T,
\end{equation*}
holds for some $C_0=C_0(T)>0$ and $\alpha, \beta >-1$. For $f\in X_2$ and $(t,s)\in \widetilde{\Lambda}_T$, set
$T_0(t,s)f:=R(t,s)f$ and
\begin{equation*}
T_n(t,s)f:= \int_s^t T_{n-1}(t,r)S(r,s)f\d s, \qquad n\in \N, \; (t,s)\in \widetilde{\Lambda}_T.
\end{equation*}
Then there exists a constant $C > 0$ such that
\begin{equation}\label{eq:convergence_series}
\sum_{n=0}^\infty \|T_n(t,s)f\|_{X_1} \leq C (t-s)^{\alpha}\|f\|_{X_2}, \qquad (t,s)\in \widetilde{\Lambda}_T.
\end{equation}
Moreover, if $\alpha \geq 0$, the convergence of the series in (\ref{eq:convergence_series}) is uniform on $\Lambda_T$.
\end{lemma}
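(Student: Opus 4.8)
The plan is to prove the estimate $\sum_{n=0}^\infty \|T_n(t,s)f\|_{X_1} \le C(t-s)^\alpha \|f\|_{X_2}$ by estimating each iterate $T_n(t,s)$ separately and summing the resulting series. First I would record the obvious bound $\|T_0(t,s)f\|_{X_1} = \|R(t,s)f\|_{X_1} \le C_0(t-s)^\alpha \|f\|_{X_2}$. Then, unfolding the recursion, one sees that $T_n(t,s)f$ is an $n$-fold iterated integral: writing $r_0 = t$, $r_{n+1}=s$,
\begin{equation*}
T_n(t,s)f = \int_{s}^{t}\!\!\int_{s}^{r_1}\!\!\cdots\!\!\int_s^{r_{n-1}} R(t,r_1)S(r_1,r_2)\cdots S(r_n,s) f \,\d r_n \cdots \d r_1,
\end{equation*}
and applying the two hypotheses $\|R(t,s)\|_{\L(X_2,X_1)} \le C_0(t-s)^\alpha$ and $\|S(t,s)\|_{\L(X_2)}\le C_0(t-s)^\beta$ inside the integral gives
\begin{equation*}
\|T_n(t,s)f\|_{X_1} \le C_0^{n+1}\|f\|_{X_2} \int_s^t (t-r_1)^\alpha (r_1-r_2)^\beta \cdots (r_n-s)^\beta \,\d r_n\cdots \d r_1.
\end{equation*}

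The core of the argument is then a Beta-function computation of this multiple integral. Substituting $r_i = s + \tau_i(t-s)$ reduces the integral to $(t-s)^{\alpha + (n+1)(\beta+1) - 1}$ times a dimensionless constant, namely an iterated integral over the simplex $1 > \tau_1 > \cdots > \tau_n > 0$ of $(1-\tau_1)^\alpha (\tau_1-\tau_2)^\beta\cdots \tau_n^\beta$. Evaluating this by the standard Dirichlet/Beta-integral identity $\int_0^x (x-\sigma)^a \sigma^b \,\d\sigma = x^{a+b+1} B(a+1,b+1)$ applied inductively, one obtains
\begin{equation*}
\|T_n(t,s)f\|_{X_1} \le C_0^{n+1}(t-s)^{\alpha + (n+1)(\beta+1) - 1}\, \frac{\Gamma(\alpha+1)\Gamma(\beta+1)^{n+1}}{\Gamma(\alpha + (n+1)(\beta+1))}\,\|f\|_{X_2}.
\end{equation*}
Here the hypotheses $\alpha,\beta > -1$ are exactly what is needed to make each one-dimensional integral converge and to keep all Gamma-function arguments positive.

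Summing over $n$ and setting $\mu := \beta+1 > 0$, the series becomes $(t-s)^\alpha\|f\|_{X_2}$ times $\sum_{n\ge 0} C_0^{n+1} (t-s)^{(n+1)\mu - \mu} \Gamma(\alpha+1)\Gamma(\mu)^{n+1}/\Gamma(\alpha+n\mu + \mu)$, which up to a harmless factor is a Mittag-Leffler-type series $\sum_{k\ge 1} \big(C_0 \Gamma(\mu)(t-s)^\mu\big)^k / \Gamma(\alpha + k\mu)$. Since $\Gamma(\alpha+k\mu)$ grows super-exponentially in $k$ while $(t-s)^{k\mu}$ is bounded by $T^{k\mu}$ on $\widetilde\Lambda_T$, this series converges and its sum is bounded by a constant $C = C(T,\alpha,\beta,C_0)$ uniformly on $\widetilde\Lambda_T$; this yields \eqref{eq:convergence_series}. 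The main technical obstacle is keeping the book-keeping in the iterated Beta-integral correct — in particular verifying by induction that the $n$-fold simplex integral produces precisely the ratio of Gamma functions above — and then checking the convergence of the resulting special-function series; neither step is deep, but the inductive Beta-integral evaluation is where care is required. Finally, when $\alpha \ge 0$ the prefactor $(t-s)^\alpha$ is bounded and continuous up to the diagonal, and since the tail bounds of the series are uniform in $(t,s)\in\Lambda_T$ the convergence is uniform there as well, giving the last assertion; strong continuity of each $T_n$ (hence of the sum, by the uniform bound) follows from the assumed strong continuity of $R$ and $S$ together with dominated convergence in the defining integrals.
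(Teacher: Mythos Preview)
Your approach is correct and is precisely the standard one: unfold the recursion into an $n$-fold simplex integral, evaluate it via iterated Beta functions, and sum the resulting Mittag--Leffler-type series using the super-exponential growth of the Gamma function in the denominator. The paper does not actually give a proof of this lemma --- it refers to \cite{Hansel/Rhandi:2010} --- but exactly the same Beta-function bookkeeping is carried out later in the paper when estimating the remainders $R_n(t,s)f$ in \eqref{eq:correction_terms}, so your argument is fully in line with the paper's methods.

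One small bookkeeping slip, which you yourself flagged as the place ``where care is required'': for $n\ge 1$ the correct bound is
\begin{equation*}
\|T_n(t,s)f\|_{X_1} \le C_0^{\,n+1}(t-s)^{\alpha + n(\beta+1)}\, \frac{\Gamma(\alpha+1)\,\Gamma(\beta+1)^{n}}{\Gamma\big(\alpha +1+ n(\beta+1)\big)}\,\|f\|_{X_2},
\end{equation*}
i.e.\ the exponent of $(t-s)$ and the powers in the Gamma ratio are each shifted by one step relative to your displayed formula (just check $n=1$: $\int_s^t (t-r)^\alpha(r-s)^\beta\,\d r = (t-s)^{\alpha+\beta+1}\mathrm{B}(\alpha+1,\beta+1)$). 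This is harmless --- in fact your subsequent series $\sum_{k\ge 1}\big(C_0\Gamma(\mu)(t-s)^\mu\big)^k/\Gamma(\alpha+k\mu)$ is already consistent with the corrected indices, so the conclusion stands unchanged.
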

\begin{proof}[Proof of Theorem  \ref{thm:evolution_system_exterior}.]
Let $T>0$. By using Proposition \ref{prop:evolution_whole} and Corollary \ref{prop:Lp_Lq_estimates_bounded} we have
\begin{equation*}
\|W(t,s)f\|_{p,\Omega} \leq C\|f\|_{p,\Omega} \qquad \quad \mbox{for}\; f \in L^p_\sigma(\Omega), \; (t,s)\in \Lambda_T.
\end{equation*}
So, by (\ref{eq:norm_estimate_error}), we can apply Lemma \ref{lemma:iterated} with $R=W$, $S=\P_\Omega F,\,\alpha =0,\,\beta =-\frac{\gamma}{2}$ and $X_1=X_2=L^p_\sigma(\Omega)$. Thus, for any $f\in L^p_\sigma(\Omega)$,
the series $\sum_{k=0}^{\infty}T_k(t,s)f$ converges uniformly in $\Lambda_T$,
where $T_0(t,s)f=W(t,s)f$ and
\begin{equation}
T_{k+1}(t,s)f= \int_s^t T_k(t,r)\P_\Omega F(r,s)f \d r,\quad (t,s)\in \Lambda_T,\,f\in L^p_\sigma(\R^d).
\end{equation}
Since $T>0$ is arbitrary,
\begin{equation}\label{eq:representation_evolution_exterior}
T_{\Omega,b} (t,s):=\sum_{k=0}^{\infty}T_k(t,s),\quad(t,s)\in \Lambda
\end{equation}
is well-defined. It is easy to check that $T_{\Omega,b}(t,s)$ satisfies the integral equation \eqref{eq:integral_eq}. Moreover, from the strong continuity of $W(\cdot ,\cdot)$ and (\ref{eq:norm_estimate_error}) we deduce inductively that $T_k(\cdot ,\cdot)$ is strongly continuous
and hence, by the uniform convergence of the series we get the strong continuity of $T_{\Omega,b}(\cdot ,\cdot)$.

In order to show that $\{T_{\Omega ,b}(t,s)\}_{(t,s)\in \Lambda}$ leaves $Y_\Omega$ invariant, we proceed as in the proof of Theorem 3.1 in \cite{Hansel/Rhandi:2010} and consider the Banach space
$$Z :=\{f\in W_0^{1,p}(\Omega)^d\cap L^p_\sigma(\Omega) : |x|\cdot \nabla f_i(x) \in L^p(\Omega)^d\; \mbox{for} \; i=1,\ldots,d\}$$ endowed with the norm
$$\|f\|_{Z}:=\|f\|_{1,p,\Omega}+\||x|\cdot \nabla f\|_{p,\Omega},\quad f\in Z.$$
Proposition \ref{prop:evolution_whole}, Corollary \ref{prop:Lp_Lq_estimates_bounded} and (\ref{eq:norm_estimate_error}) permit us to apply
Lemma \ref{lemma:iterated} with $X_1=X_2=Z,\,R=W,\,S=\P_\Omega F,\alpha =0$ and $\beta =-\frac{\gamma}{2}$.
So, we obtain that $T_{\Omega ,b}(t,s)f \in Z$ for all $f\in Z$ and $(t,s)\in \Lambda$. Moreover, by taking
$X_1=W^{2,p}(\Omega)^d,\,X_2=W^{1,p}(\Omega)^d\cap L^p_\sigma(\Omega),\,R=W,\,S=\P_\Omega F,\,\alpha=-\frac{1}{2},\,\beta =-\frac{\gamma}{2}$ and applying Proposition \ref{prop:evolution_whole}, Corollary \ref{prop:Lp_Lq_estimates_bounded}
and (\ref{eq:norm_estimate_error}), it follows, by Lemma \ref{lemma:iterated}, that
  $T_{\Omega ,b}(t,s)f \in W^{2,p}(\Omega)$ for all $f\in W^{1,p}(\Omega)^d\cap L^p_\sigma(\Omega)$ and $(t,s)\in \widetilde{\Lambda}$. This yields that $\{T_{\Omega ,b}(t,s)\}_{(t,s)\in \Lambda}$ leaves $Y_\Omega$ invariant and
  \begin{align}\label{conv-in-Y_Omega}
  &\sum_{n=0}^\infty \left[\|T_k(t,s)f\|_{2,p,\Omega}+\||x|\cdot \nabla T_k(t,s)f\|_{p,\Omega}\right]\notag\\
  &\qquad\quad\quad <C_T(1+(t-s)^{-\frac{1}{2}})
   (\|f\|_{1,p,\Omega}+\||x|\cdot \nabla f\|_{p,\Omega}),\quad (t,s)\in \widetilde{\Lambda}_T ,\,f\in Y_\Omega.
  \end{align}

Let us now prove that for every $f\in Y_\Omega$ and for every $s\geq 0$ fixed, the map $t\mapsto T_{\Omega,b}(t,s)f$ is differentiable on $(s,\infty)$ and that \eqref{eq:derivative_evolution_exterior} holds. For $f\in Y_\Omega$ we compute
\begin{align*}
\partial_t T_0(t,s)f & = A_{\Omega,b}(t) T_0(t,s) f - \P_\Omega F(t,s)f\\
\partial_t T_1(t,s)f & = A_{\Omega,b}(t) T_1(t,s) f + \P_\Omega F(t,s)f - \int_s^t \P_\Omega F(t,r)\P_\Omega F(r,s)f \d r\\
\partial_t T_2(t,s)f & = A_{\Omega,b}(t) T_2(t,s) f +  \int_s^t \P_\Omega F(t,r)\P_\Omega F(r,s)f \d r \\
& \qquad - \int_s^t \int_{r_1}^t \P_\Omega F(t,r_2)\P_\Omega F(r_2,r_1)\P_\Omega F(r_1,s)f \d r_2 \d r_1.
\end{align*}
Inductively we see that
\begin{equation}\label{eq:derivative}
\partial_t \sum_{k=0}^n T_k(t,s)f  = A_{\Omega,b}(t) \sum_{k=0}^n T_k(t,s)f - R_n(t,s)f
\end{equation}
holds for $n\in \N$, where
\begin{equation*}
R_n(t,s)f := \int_s^t \int_{r_1}^t \ldots \int_{r_{n-1}}^t \P_\Omega F(t,r_n)\P_\Omega F(r_n,r_{n-1})\ldots \P_\Omega F(r_1,s)f\,\d r_n \ldots \d r_2 \d r_1.
\end{equation*}

We estimate now the norm of $R_n(t,s)f$. By Lemma \ref{lemma:error_terms} we obtain
\begin{align*}
\|R_1(t,s)f\|_{p,\Omega} &\leq C^2 \int_s^t (t-r)^{-\frac \gamma 2}(r -s)^{-\frac \gamma 2} \d r \|f\|_{p,\Omega}= C^2 \B(1-\gamma/2,1-\gamma/2)(t-s)^{1-\gamma}\|f\|_{p,\Omega},\\
\|R_2(t,s)f\|_{p,\Omega} & \leq C^3 \B(1-\gamma/2,1-\gamma/2)\int_s^t (t-r)^{1-\gamma}(r-s)^{-\frac{\gamma}{2}}\d r \|f\|_{p,\Omega}\\
& = C^3\B(1-\gamma/2,1-\gamma/2)\B(1-\gamma/2,2-\gamma)(t-s)^{2-\frac{3\gamma}{2}}\|f\|_{p,\Omega}.
\end{align*}
Inductively we see that
\begin{align}\label{eq:correction_terms}
\|R_n(t,s)f\|_{p,\Omega} &\leq C^{n+1}\B(1-\gamma/2,1-\gamma/2)\B(1-\gamma/2,2-\gamma)\ldots\notag\\
&\qquad\qquad  \ldots \B(1-\gamma/2, n-(n\gamma)/2)(t-s)^{n-\frac{(n+1)\gamma}{2}}\|f\|_{p,\Omega}\nonumber\\
& \leq \frac{C^{n+1}\Gamma(1-\gamma/2)^n}{\big[n-\frac{(n+1)\gamma}{2}\big]!}(t-s)^{n-\frac{(n+1)\gamma}{2}}\|f\|_{p,\Omega}
\end{align}
holds for $n\in \N$. Here the constant $C$ may change from line to line. From estimate (\ref{eq:correction_terms}) it follows that $\|R_n(t,s)\|_p$ tends to zero as $n\rightarrow \infty$. Since we used Lemma \ref{lemma:iterated}, we know that
 the convergence of $\sum_{k=0}^\infty T_k(t,s)f$ is uniform in $\Lambda_T$ for any $f\in Y_\Omega$ and any $T>0$ and so, by using
 (\ref{conv-in-Y_Omega}) and the closedness of $A_{\Omega,b}(t)$ we can conclude that
\begin{equation*}
\frac{\partial}{\partial t} \sum_{k=0}^{\infty} T_k(t,s)f  = A_{\Omega,b}(t) \sum_{k=0}^{\infty} T_k(t,s)f
\end{equation*}
 holds and this proves (\ref{eq:derivative_evolution_exterior}).

Let us show now the differentiability of the map $s\mapsto T_{\Omega,b}(t,s)f$ on $[0,t)$ for $t>0$ and $f\in Y_\Omega$. First we
note by a short calculation that for $f\in \D(L_{\Omega,b}(s))$
\begin{align*}
L_{\R^d}(s)f_R &= (L_{\Omega,b} (s)f)_R + 2 \nabla f \cdot (\nabla \xi)^* + (\Delta \xi + (M(s)x + c(s))\cdot \nabla \xi) f \\
&\qquad\quad - L_{\R^d}(s)\BB_1((\nabla\xi) \cdot f) + \BB_1((\nabla\xi)\cdot L_{\Omega,b}(s)f)
\end{align*}
 and
 \begin{align*}
 L_{D,b}(s)f_D & = (L_{\Omega,b}(s) f)_D + 2 \nabla f \cdot (\nabla \eta)^* + (\Delta \eta + (M(s)x + c(s))\cdot \nabla \eta) f \\
 &\qquad\quad - L_{D,b}(s)\BB_3((\nabla\eta) \cdot f) + \BB_3((\nabla\eta)\cdot L_{\Omega,b}(s)f).
 \end{align*}
 In the following we set $\nabla \tilde\p := (\Id-\P_\Omega)L_{\Omega,b}(s)f$. Then we can conclude that for any
 $f\in\D(A_{\Omega,b}(s))$ we have
 \begin{align*}
 \P_{\R^d}(L_{\Omega,b} (s)f)_R &= \P_{\R^d}\left[\xi A_{\Omega,b}(s)f - \BB_1((\nabla \xi)A_{\Omega,b}(s)f)+\xi\nabla\tilde \p - \BB_1((\nabla\xi)\nabla \tilde \p) \right]\\
 & = (A_{\Omega,b}f)_R - \P_{\R^d}[\tilde \p \nabla \xi  + \BB_1((\nabla \xi)\nabla \tilde \p)],
 \end{align*}
 and analogously
 \begin{align*}
 \P_{D}(L_{\Omega,b} (s)f)_D &= (A_{\Omega,b}f)_D - \P_{D}[\tilde \p \nabla \eta  + \BB_3((\nabla \eta)\nabla \tilde \p)].
 \end{align*}
 Thus, for $f\in Y_\Omega$, we obtain
 \begin{align*}
 \frac{\partial}{\partial s} W(t,s)f & = -\varphi T_{\R^d}(t,s)A_{\R^d}(s) f_R - (1-\varphi)T_{D,b}(t,s)A_{D,b}(s)f_D\\
 &\qquad \quad + \BB_2((\nabla \varphi)\cdot (T_{\R^d}(t,s)A_{\R^d}(s)f_R-T_{D,b}(t,s)A_{D,b}(s)f_D))\\[0.15cm]
 & = - W(t,s)A_{\Omega,b}(s)f - G(t,s)f
 \end{align*}
 where
 \begin{align*}
 G(t,s)f := &\; \varphi T_{\R^d}(t,s)\P_{\R^d}\left[2 \nabla f \cdot (\nabla \xi)^* + (\Delta \xi + (M(s)x + c(s))\cdot \nabla \xi) f \right.\\
&\left. \qquad\qquad - L_{\R^d}(s)\BB_1((\nabla\xi) \cdot f) + \BB_1((\nabla\xi)\cdot L_{\Omega,b}(s)f) - \tilde \p \nabla \xi  - \BB_1((\nabla \xi)\nabla \tilde \p)\right]\\
& \;+ (1-\varphi) T_{D,b}(t,s)\P_D\left[2\nabla f \cdot (\nabla \eta)^* + (\Delta \eta + (M(s)x + c(s))\cdot \nabla \eta)f \right.\\
 &\left. \qquad\qquad - L_{D,b}(s)\BB_3 ((\nabla\eta) \cdot f) + \BB_3((\nabla\eta)\cdot L_{\Omega,b}(s)f)-\tilde \p \nabla \eta  - \BB_3((\nabla \eta)\nabla \tilde \p)\right]\\
 &\; - \BB_2\left\{ (\nabla \varphi) \cdot T_{\R^d}(t,s)\P_{\R^d}\left[2 \nabla f \cdot (\nabla \xi)^* + (\Delta \xi + (M(s)x + c(s))\cdot \nabla \xi) f\right. \right. \\
&\left. \left. \qquad\qquad - L_{\R^d}(s)\BB_1((\nabla\xi) \cdot f) + \BB_1((\nabla\xi)\cdot L_{\Omega,b}(s)f) - \tilde \p \nabla \xi  - \BB_1((\nabla \xi)\nabla \tilde \p)\right]  \right\}\\
 & \; + \BB_2\left\{(\nabla \varphi) \cdot T_{D,b}(t,s)\P_D\left[2\nabla f \cdot (\nabla \eta)^* + (\Delta \eta + (M(s)x + c(s))\cdot \nabla \eta)f \right. \right. \\
 &\left. \left. \qquad\qquad - L_{D,b}(s)\BB_3 ((\nabla\eta) \cdot f) + \BB_3((\nabla\eta)\cdot L_{\Omega,b}(s)f) - \tilde \p \nabla \eta  - \BB_3((\nabla \eta)\nabla \tilde \p)\right]\right\}%\\
 %&\;=: J_1+J_2+J_3+J_4
 \end{align*}
 We estimate now the norm of $\tilde p$ in $L^p(D)$ since in the expression of $G(t,s)f$ the supports of the functions
 $\tilde p\nabla \xi$ and $\tilde p\nabla \eta$ are subsets of $D$.
 %The strategy is similar to the proof of Lemma \ref{lem:pressure}. Let $\varphi \in C_c^\infty(D)$ and extend $\varphi$ to $\Omega$ by $0$. Then by \cite[Proposition 5.6]{Shibata/Shimada:2007} there is a unique solution $\psi$ to Neumann problem
%\begin{equation}\label{eq:Neumann}
%\left\{\begin{array}{rcll}
%\Delta \psi &=& \varphi &\mbox{in}\;\Omega,\\
%\nabla \psi \cdot\nu &=&0&\mbox{on}\; \partial \Omega,
%\end{array}\right.
%\end{equation}
%which satisfies the estimate
%\begin{equation*}
%\|\nabla\psi\|_{W^{1,p'}(\Omega)}\leq C \|\varphi\|_{L^{p'}(\Omega)},
%\end{equation*}
%for some constant $C>0$.
Analogously to the proof of Lemma \ref{lem:pressure} we can show that for any $\varphi \in C_c^\infty(D)$
\begin{eqnarray*}
\left|\langle \tilde p ,\varphi \rangle_D\right| &\le &C
\|f \|_{\gamma,p,D}\|\varphi \|_{p',D}\\
&\le &C
\|f \|_{\gamma,p,\Omega}\|\varphi \|_{p',D}
\end{eqnarray*} and hence
$$\|\tilde p\|_{p,D}\le C\|f\|_{\gamma,p,\Omega}$$
holds, where $\gamma \in (1+\frac 1 p , 2)$. Now, using the same arguments as in the proof of Lemma \ref{lemma:error_terms} we obtain that
 $$
 \|G(t,s)f\|_{\gamma,p,\Omega}\leq C (t-s)^{-\frac {\gamma}{2}}\|f\|_{\gamma,p,\Omega}
 $$
 for some constant $C>0$ and $f\in H^{\gamma,p}(\Omega)^d\cap L^p_\sigma(\Omega)$. Now we apply Lemma \ref{lemma:iterated}  with $X_1=X_2=H^{\gamma,p}(\Omega)^d\cap L^p_\sigma(\Omega)$, $R=S=G$ and $\alpha=\beta=-\tfrac{\gamma}{2}$. So, the series
 \begin{equation*}
V(t,s)f := \sum_{k=0}^{\infty} V_k (t,s)f, \qquad (t,s)\in \widetilde{\Lambda},
\end{equation*}
is well-defined and
\begin{equation}\label{eq-22}
\|V(t,s)f\|_{\gamma,p,\Omega}\le C(t-s)^{-\frac{\gamma}{2}}\|f\|_{\gamma,p,\Omega},\quad (t,s)\in \widetilde{\Lambda}_T,
\end{equation}
 for $f\in H^{\gamma,p}(\Omega)^d\cap L^p_\sigma(\Omega)$. On the other hand, $V(\cdot,\cdot)$ satisfies the integral equation
\begin{equation}\label{eq:integral}
V(t,s)f = G(t,s)f + \int_s^t V(t,r)G(r,s)f \d r, \qquad (t,s)\in \widetilde{\Lambda},\,f\in H^{\gamma,p}(\Omega).
\end{equation}
In particular $V(t,\cdot)f$ is continuous on $[0,t)$ with respect to the $L^p$-norm for any $f\in H^{\gamma,p}(\Omega)^d\cap L^p_\sigma(\Omega)$ and
$t>0$. Now, for $f\in L^p_\sigma(\Omega)$ and $(t,s)\in \Lambda$ we set
\begin{equation*}
\tilde T(t,s)f := W(t,s)f+\int_s^t V(t,r)W(r,s)f \d r.
\end{equation*}
It follows from the continuity of $V(t,\cdot)W(\cdot ,s)f$ on $[s,t)$, Proposition \ref{prop:evolution_whole}, Proposition \ref{prop:evolution_bounded} and (\ref{eq-22}) that the above integral is well-defined for any $f\in L^p_\sigma(\Omega)$.
Computing the derivative with respect to $s\in [0,t)$ yields
\begin{align*}
\frac{\partial}{\partial s} \tilde T(t,s)f &= - W(t,s)L_\Omega(s)f - G(t,s)f+ V(t,s)f - \int_s^t V(t,r) W(r,s)L_\Omega(s) f \d r \\
& \qquad \quad - \int_s^t V(t,r)G(r,s)f \d r  \\[0.2cm]
& = - \tilde T(t,s)L_\Omega(s)f ,
\end{align*}
for any $f\in Y_\Omega$, due to (\ref{eq:integral}). From this equality together with (\ref{eq:derivative_evolution_exterior}) and since
$T_{\Omega,b}(t,s)Y_\Omega \subset Y_\Omega,\,(t,s)\in \Lambda$ we can conclude that
\begin{align*}
\frac{\partial}{\partial r} (\tilde T(t,r)T_{\Omega,b}(r,s)f)= 0
\end{align*}
holds for all $f\in Y_\Omega$. This yields that for $f\in Y_\Omega$, the function $\tilde T(t,r)T_{\Omega,b}(r,s)f$ is constant on $\Lambda_T $ and thus, by the density of $Y_\Omega$ in $L^p_\sigma(\Omega)$ and by the fact that $T>0$ was arbitrary, it follows that $\tilde T(t,s)f=T_{\Omega,b}(t,s)f$ holds for all $f\in L^p_\sigma(\Omega)$, $(t,s)\in \Lambda$. This proves  (\ref{eq:derivative_evolution_exterior_2}).

The uniqueness of the evolution system $\{T_{\Omega,b}(t,s)\}_{(t,s)\in \Lambda}$ follows by a similar argument from \eqref{eq:derivative_evolution_exterior} and \eqref{eq:derivative_evolution_exterior_2} (see the proof of Theorem 3.1 in \cite{Hansel/Rhandi:2010} for details).

The estimate in (d) can be obtained by applying Lemma \ref{lemma:iterated} with $X_1=W^{1,p}(\Omega),\,X_2=L^p_\sigma(\Omega),\,R=W,\,S=\P_\Omega F,\,\alpha =-\frac{1}{2},\,\beta=-\frac{\gamma}{2}$, Proposition \ref{prop:evolution_whole}  and Corollary  \ref{prop:Lp_Lq_estimates_bounded}.

Finally, the first estimate in (e) follows by applying Lemma \ref{lemma:iterated} with $X_1=L^q_\sigma(\Omega),\,X_2=L^p_\sigma(\Omega),\,R=W,\,S=\P_\Omega F,\,\alpha=
-\frac{d}{2}\left(\frac 1 p - \frac 1 q\right),\,\beta =-\frac{\gamma}{2}$,
Proposition \ref{prop:evolution_whole} and Corollary \ref{prop:Lp_Lq_estimates_bounded} if $\frac{1}{p}-\frac{1}{q}\le \frac{2}{d}$.
The case $\frac{1}{p}-\frac{1}{q}> \frac{2}{d}$ can be obtained by iteration as in the proof of Corollary \ref{prop:Lp_Lq_estimates_bounded}.
The second estimate follows now from the first one and (d) (see the proof of Corollary  \ref{prop:Lp_Lq_estimates_bounded}).
 \end{proof}
To conclude this section let us state the following lemma about the behavior of $T_{\Omega,b}(t,s)$ near $t=s$, which will be needed in the proof of Theorem \ref{thm:Kato}.
\begin{lemma}\label{lem5.3}
Let $s\geq0$ and $f\in L^p_{\sigma}(\Omega)$. Then we have
\begin{enumerate}
\item[(a)] for $1< p<q<\infty$
$$
	\lim\limits_{t\to s,\ t> s}(t-s)^{\frac d 2 \left(\frac 1 p - \frac 1 q\right)} \|T_{\Omega,b}(t,s)f\|_{q,\Omega}
	=0,$$
\item[(b)] for $1<p<\infty$
$$	\lim\limits_{t\to s,\ t> s}
(t-s)^{\frac 1 2 } \|\nabla T_{\Omega,b}(t,s)f\|_{p,\Omega}=0.$$
\end{enumerate}
\end{lemma}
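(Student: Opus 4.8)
The plan is to derive Lemma \ref{lem5.3} from the $L^p$-$L^q$ smoothing and gradient estimates of Theorem \ref{thm:evolution_system_exterior}(d),(e) by a standard density argument, exploiting the strong continuity of the evolution system together with the fact that the singular factors $(t-s)^{-\frac d2(\frac1p-\frac1q)}$ and $(t-s)^{-\frac12}$ are \emph{exactly} compensated by those appearing in the smoothing estimates. First I would treat part (a). Fix $1<p<q<\infty$ and $f\in L^p_\sigma(\Omega)$. Given $\varepsilon>0$, since $L^q_\sigma(\Omega)\cap L^p_\sigma(\Omega)$ is dense in $L^p_\sigma(\Omega)$ (it contains $C^\infty_{c,\sigma}(\Omega)$), choose $g\in L^q_\sigma(\Omega)\cap L^p_\sigma(\Omega)$ with $\|f-g\|_{p,\Omega}<\varepsilon$. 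Then split
\begin{equation*}
T_{\Omega,b}(t,s)f = T_{\Omega,b}(t,s)(f-g) + T_{\Omega,b}(t,s)g.
\end{equation*}
For the first term, estimate \eqref{Lpq} gives
\begin{equation*}
(t-s)^{\frac d2(\frac1p-\frac1q)}\|T_{\Omega,b}(t,s)(f-g)\|_{q,\Omega}\le C\|f-g\|_{p,\Omega}< C\varepsilon,
\end{equation*}
uniformly for $(t,s)\in\widetilde\Lambda_T$. For the second term, using the consistency of the evolution systems (Remark (b) after Theorem \ref{thm:evolution_system_exterior}) we have $T_{\Omega,b}(t,s)g=T^q_{\Omega,b}(t,s)g$, and by strong continuity on $L^q_\sigma(\Omega)$ the quantity $\|T_{\Omega,b}(t,s)g\|_{q,\Omega}$ stays bounded as $t\to s^+$, so $(t-s)^{\frac d2(\frac1p-\frac1q)}\|T_{\Omega,b}(t,s)g\|_{q,\Omega}\to0$. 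Hence $\limsup_{t\to s^+}(t-s)^{\frac d2(\frac1p-\frac1q)}\|T_{\Omega,b}(t,s)f\|_{q,\Omega}\le C\varepsilon$, and since $\varepsilon$ is arbitrary the limit is $0$.

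Part (b) is handled by the same scheme, now with the gradient estimate of Theorem \ref{thm:evolution_system_exterior}(d): $\|\nabla T_{\Omega,b}(t,s)f\|_{p,\Omega}\le C(t-s)^{-1/2}\|f\|_{p,\Omega}$. Here the dense subspace I would use is $Y_\Omega$ (or even $\D(A_{\Omega,b}(s))$), which is dense in $L^p_\sigma(\Omega)$. Given $f\in L^p_\sigma(\Omega)$ and $\varepsilon>0$, pick $g\in Y_\Omega$ with $\|f-g\|_{p,\Omega}<\varepsilon$; then $(t-s)^{1/2}\|\nabla T_{\Omega,b}(t,s)(f-g)\|_{p,\Omega}\le C\varepsilon$ by (d), while for $g\in Y_\Omega$ the invariance $T_{\Omega,b}(t,s)g\in Y_\Omega\subset W^{2,p}(\Omega)^d$ together with the differentiability from Theorem \ref{thm:evolution_system_exterior}(b) shows that $t\mapsto T_{\Omega,b}(t,s)g$ is continuous into $W^{1,p}(\Omega)^d$ up to $t=s$, so $\|\nabla T_{\Omega,b}(t,s)g\|_{p,\Omega}$ is bounded near $t=s$ and the prefactor $(t-s)^{1/2}$ kills it. Letting $\varepsilon\to0$ finishes the proof.

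I do not expect a serious obstacle here; the only point requiring a little care is the claim that $\|\nabla T_{\Omega,b}(t,s)g\|_{p,\Omega}$ (resp.\ $\|T_{\Omega,b}(t,s)g\|_{q,\Omega}$) remains bounded as $t\to s^+$ for $g$ in the chosen dense subspace. For part (a) this is immediate from strong continuity of $T^q_{\Omega,b}(\cdot,\cdot)$ on $L^q_\sigma(\Omega)$ and consistency. For part (b) one needs that the map $t\mapsto T_{\Omega,b}(t,s)g$ is right-continuous at $t=s$ with values in $W^{1,p}(\Omega)^d$; this follows from the representation \eqref{eq:integral_eq}, the invariance of $Y_\Omega$ (with the bound \eqref{conv-in-Y_Omega}, whose singular part $(t-s)^{-1/2}$ is harmless once multiplied by $(t-s)^{1/2}$), and the strong continuity established in the proof of Theorem \ref{thm:evolution_system_exterior}. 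Thus the lemma is essentially a soft consequence of the already-proven estimates plus density.
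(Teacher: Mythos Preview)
Your proposal is correct and follows essentially the same density-plus-splitting argument as the paper: approximate $f$ by elements $f_n\in C^\infty_{c,\sigma}(\Omega)$, apply the $L^p$--$L^q$ (resp.\ gradient) estimate to $f-f_n$, and use boundedness of $\|T_{\Omega,b}(t,s)f_n\|_{q,\Omega}$ (resp.\ $\|\nabla T_{\Omega,b}(t,s)f_n\|_{p,\Omega}$) for the smooth part so that the positive prefactor kills it. One small imprecision in your justification for (b): the bound \eqref{conv-in-Y_Omega} carries a factor $(1+(t-s)^{-1/2})$, so invoking it alone only gives that $(t-s)^{1/2}\|\nabla T_{\Omega,b}(t,s)g\|_{p,\Omega}$ stays \emph{bounded}, not that it tends to zero; the clean way to get $\sup_{s<t\le s+1}\|\nabla T_{\Omega,b}(t,s)g\|_{p,\Omega}<\infty$ for $g\in Y_\Omega$ is the uniform $Z$-bound obtained in the proof of Theorem~\ref{thm:evolution_system_exterior} via Lemma~\ref{lemma:iterated} with $X_1=X_2=Z$ and $\alpha=0$.
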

\begin{proof}
Let $f\in L^p_\sigma(\Omega)$, $0\le t-s \leq 1$ and choose $(f_n)_{n\in\N}
\subset C_{c,\sigma}^{\infty}(\Omega) \subset L_{\sigma}^p(\Omega)$,
such that $\lim_{n\to\infty}\|f-f_n\|_{p,\Omega}=0$. The triangle inequality
together with the $L^p$-$L^q$ estimates stated in Theorem \ref{thm:evolution_system_exterior}
imply that there exist constants $C_1, C_2>0$ such that
\begin{align*}
&(t-s)^{\frac d 2 \left(\frac 1 p - \frac 1 q \right)}
\|T_{\Omega ,b}(t,s)f\|_{q,\Omega}\\
&\qquad \leq (t-s)^{\frac d 2 \left(\frac 1 p - \frac 1 q \right)}
\|T_{\Omega ,b}(t,s)(f-f_n)\|_{q,\Omega} + (t-s)^{\frac d 2 \left(\frac 1 p - \frac 1
q
\right)} \|T_{\Omega ,b}(t,s)f_n\|_{q,\Omega}\\
&\qquad \leq  C_1 \|f-f_n\|_{p,\Omega} + C_2 (t-s)^{\frac d 2 \left(\frac 1 p -
\frac 1 q \right)} \|f_n\|_{q,\Omega},\quad 0\leq t-s\leq 1,\
n\in\N.
\end{align*}
Hence,
$
	\lim\limits_{t\to s}(t-s)^{\frac d 2 \left(\frac 1 p - \frac 1 q
	\right)} \|T_{\Omega ,b}(t,s)f\|_{q,\Omega}=0
$
by letting first $t\rightarrow s$ and then $n\rightarrow \infty$.
The second assertion is proved in a similar
way (cf. \cite[Proposition 3.4]{Hansel:2009}).
\end{proof}
\section{Mild solutions to the nonlinear problem}
In this section we finally come back to the nonlinear problem \eqref{eq:NS_5} and its abstract formulation \eqref{eq:NS_abstract}. Based on the results proved in Section \ref{sect:exterior} and an adaptation of the Kato iteration procedure (see \cite{Kato:1984}) we now prove Theorem \ref{thm:Kato}.
\begin{proof}[Proof of Theorem  \ref{thm:Kato}.]
Let $d\le p<q$, $f\in L^p_\sigma(\Omega)$ and $T>0$. For $0<t\le T$ and $k\in \N$ we define
$u_1(t):=T_{\Omega ,b}(t,0)f$ and
$$u_{k+1}(t)=u_1(t)-\int_0^tT_{\Omega ,b}(t,s)\P_\Omega (u_k\cdot \nabla u_k)(s)\,\d s+
\int_0^tT_{\Omega ,b}(t,s)\P_\Omega F_1(s)\,\d s.$$
Let us prove that, for some $T_0>0$, the sequence $(u_k)_k$ converges in $C([0,T_0); L^p_\sigma(\Omega))$ to a mild solution $u$ of (\ref{eq:NS_abstract}) for $T\in (0,T_0)$.

We set $\beta :=\frac{d}{2}\left(\frac{1}{p}-\frac{1}{q}\right)$ and define
$$L_k:=L_k(T):=\sup_{t\in (0,T]}t^\beta \|u_k(t)\|_{q,\Omega},\,\,L'_k:=L'_k(T):=\sup_{t\in (0,T]}t^{\frac{1}{2}}\|\nabla u_k\|_{p,\Omega}$$ and
$$M_k:=M_k(T):=\sup_{t\in (0,T]}t^\beta \|u_{k+1}(t)-u_k(t)\|_{q,\Omega},\,M'_k:=M'_k(T):=
\sup_{t\in (0,T]}t^{\frac{1}{2}}\|\nabla u_{k+1}(t)-\nabla u_k(t)\|_{p,\Omega}$$ for $k\in \N$.

It follows from the $L^p$-$L^q$ estimates (\ref{Lpq}) and the boundedness of $\P_\Omega$ from $L^r(\Omega)^d$ into $L^r_\sigma(\Omega)$ that
\begin{eqnarray*}
\|u_{k+1}(t)\|_{q,\Omega} &\le & \|u_1(t)\|_{q,\Omega}+\int_0^t\left\|T_{\Omega ,b}(t,s)\P_\Omega (u_k\cdot \nabla u_k)(s)\right\|_{q,\Omega}\,\d s\\
& & \quad +\int_0^t\left\|T_{\Omega ,b}(t,s)\P_\Omega F_1(s)\right\|_{q,\Omega}\,\d s\\
&\le & t^{-\beta}L_1+C\int_0^t(t-s)^{-\frac{d}{2}\left(\frac{1}{r}-\frac{1}{q}\right)}\|u_k(s)\cdot \nabla u_k(s)\|_{r,\Omega}\,\d s\\
& & \quad +C\int_0^t\|F_1(s)\|_{q,\Omega}\,\d s\\
&\le & t^{-\beta}L_1+C\int_0^t(t-s)^{-\frac{d}{2}\left(\frac{1}{r}-\frac{1}{q}\right)}\|u_k(s)\|_{q,\Omega}\|\nabla u_k(s)\|_{p,\Omega}\,\d s\\
& & \quad +C\int_0^t\|F_1(s)\|_{q,\Omega}\,\d s\\
&\le & t^{-\beta}L_1+Ct^{-\beta}L_kL'_k\int_0^t(t-s)^{-\frac{d}{2p}}t^{\beta}s^{-\beta -\frac{1}{2}}\,\d s+
Ct^{-\beta}\int_0^t t^{\beta}\|F_1(s)\|_{q,\Omega}\,\d s,
\end{eqnarray*}
where $\frac{1}{r}=\frac{1}{p}+\frac{1}{q}$.
Since $\beta <\frac{1}{2}$ we obtain, by multiplying with $t^\beta$,
$$t^\beta \|u_{k+1}(t)\|_{q,\Omega} \le  L_1+C_1L_kL'_k+C_2T,$$
and taking the supremum over $t\in (0,T]$ yields
\begin{equation}\label{suite L}
L_{k+1}\le L_1+C_1L_kL'_k+C_2T,
\end{equation}
where $C_1,\,C_2>0$ are constants independent of $k\in \N$ but depend on $T$. Similarly, with the gradient $L^p$-$L^q$ estimates (\ref{nabla-Lpq}) we have
\begin{eqnarray}\label{eq6.2}
\|\nabla u_{k+1}(t)\|_{p,\Omega} &\le & t^{-\frac{1}{2}}L'_1+C\int_0^t(t-s)^{-\frac{d}{2}\left(\frac{1}{r}-\frac{1}{p}\right)-\frac{1}{2}}\|u_k(s)\cdot \nabla u_k(s)\|_{r,\Omega}\,\d s \nonumber \\
& & \quad +C\int_0^t(t-s)^{-\frac{1}{2}}\|F_1(s)\|_{p,\Omega}\,\d s \nonumber \\
&\le & t^{-\frac{1}{2}}L'_1+C\int_0^t(t-s)^{-\frac{d}{2q}-\frac{1}{2}}\|u_k(s)\|_{q,\Omega}\|\nabla u_k(s)\|_{p,\Omega}\,\d s\\
& & \quad +C\int_0^t(t-s)^{-\frac{1}{2}}\|F_1(s)\|_{p,\Omega}\,\d s \nonumber
\end{eqnarray}
and hence, as above, we obtain
\begin{equation}\label{suite L'}
L'_{k+1}\le L'_1+C_3L_kL'_k+C_4T,
\end{equation}
where $C_3,\,C_4>0$ are constants independent of $k\in \N$ but depend on $T$.
By setting $R_k:=R_k(T):=\max\{L_k,L'_k\}$, it follows from (\ref{suite L}) and (\ref{suite L'}) that $R_{k+1}\le R_1+c_1R_k^2+c_2T$ for some constants $c_1,\,c_2\ge 1$. By Lemma \ref{lem5.3} for any $\varepsilon >0$, there is $\widetilde{T}_0>0$ such that $R_1<\varepsilon$ for $T<\widetilde{T}_0$.
So, for $\varepsilon \le \frac{1}{8c_1}$, it follows by induction that $$R_k\le 4\varepsilon ,\quad k\in \N,\,T<\min\left(\widetilde{T}_0,\frac{\varepsilon}{c_2}\right)=:T_0.$$ Thus, the sequences
\begin{equation}\label{suites}
(t\mapsto t^\beta u_k(t))_k\quad \hbox{\ and}\quad (t\mapsto t^{\frac{1}{2}}\nabla u_k(t))_k
\end{equation}
 are uniformly bounded in
$L^q_\sigma(\Omega)$ and $L^p(\Omega)^{d\times d}$ respectively for $t\le T$ and all $k\in \N$. The continuity of the maps
$t\mapsto t^\beta u_1(t)$ and $t\mapsto t^{\frac{1}{2}}\nabla u_1(t)$ at $t=0$ follows from Lemma \ref{lem5.3}. Hence the continuity
of $t\mapsto t^\beta u_k(t)$ and $t\mapsto t^{\frac{1}{2}}\nabla u_k(t)$ follows from similar calculations.

We prove now that the sequences in (\ref{suites}) are Cauchy sequences. Since
$$u_k\cdot \nabla u_k-u_{k-1}\cdot \nabla u_{k-1}=u_k\cdot \nabla (u_k-u_{k-1})+(u_k-u_{k-1})\cdot \nabla u_{k-1}$$ and by the same computations as above we obtain
\begin{eqnarray*}
\|u_{k+1}(t)-u_k(t)\|_{q,\Omega} &\le & \int_0^t \left\|T_{\Omega ,b}(t,s)\P_\Omega\left((u_k\cdot \nabla u_k)(s)-(u_{k-1}\cdot \nabla u_{k-1})(s)\right)\right\|_{q,\Omega}\,\d s\\
&\le & C\int_0^t (t-s)^{-\frac{d}{2p}}\left(\|u_k(s)\|_{q,\Omega}\|\nabla (u_k(s)-u_{k-1}(s))\|_{p,\Omega}\right. \\
& & \quad \quad \left. +\|u_k(s)-u_{k-1}(s)\|_{q,\Omega}\|\nabla u_{k-1}(s)\|_{p,\Omega}\right)\,\d s
\end{eqnarray*}
and
\begin{eqnarray*}
\|\nabla u_{k+1}(t)-\nabla u_k(t)\|_{p,\Omega} &\le & \int_0^t \left\|\nabla T_{\Omega ,b}(t,s)\P_\Omega\left((u_k\cdot \nabla u_k)(s)-(u_{k-1}\cdot \nabla u_{k-1})(s)\right)\right\|_{p,\Omega}\,\d s\\
&\le & C\int_0^t (t-s)^{-\frac{d}{2q}-\frac{1}{2}}\left(\|u_k(s)\|_{q,\Omega}\|\nabla (u_k(s)-u_{k-1}(s))\|_{p,\Omega}\right. \\
& & \quad \quad \left. +\|u_k(s)-u_{k-1}(s)\|_{q,\Omega}\|\nabla u_{k-1}(s)\|_{p,\Omega}\right)\,\d s.
\end{eqnarray*}
Therefore,
\begin{eqnarray*}
M_k &\le & C_5(M'_{k-1}L_k+M_{k-1}L'_{k-1})\le 2C_5R_1(M'_{k-1}+M_{k-1}),\\
M'_k &\le & C_6(M'_{k-1}L_k+M_{k-1}L'_{k-1})\le 2C_6R_1(M'_{k-1}+M_{k-1}),
\end{eqnarray*}
and hence,
\begin{eqnarray*}
(M_k+M'_k) &\le & 4\varepsilon (C_5+C_6)(M_{k-1}+M'_{k-1})\\
&\le & \frac{1}{2}(M_{k-1}+M'_{k-1})
\end{eqnarray*}
for $\varepsilon \le \min\left(\frac{1}{8(C_5+C_6)},\frac{1}{8c_1}\right)$ and $T<T_0$,
where $C_5,\,C_6>0$ are constants independent of $k\in \N$ but depend on $T$.
Thus,
$(t\mapsto t^\beta u_k(t))_k$ converges to some $t\mapsto t^\beta u(t)\in C([0,T], L^q_\sigma(\Omega))$ and
$(t\mapsto t^{\frac{1}{2}}\nabla u_k(t))_k$ converges to some $t\mapsto t^{\frac{1}{2}}v\in C([0,T], L^p(\Omega)^{d\times d})$. It is now clear that $v(t)=\nabla u(t)$ and $u$ is a mild solution of \eqref{eq:NS_abstract} on $[0,T]$.

The case $d<p=q$ follows by taking $\beta =\frac{1-\delta}{2}$ for $\delta \in (0,1)$ in the above computations. Here we need
$\frac{d}{2q}+\frac{1}{2}<1$ to make the integral in (\ref{eq6.2}) convergent.

Moreover, take any $p\ge d$ and any $r>p$ and using the same estimates as for $\|u_{k+1}(t)\|_{q,\Omega}$ above, we obtain
\begin{eqnarray*}
\|u(t)\|_{p,\Omega} &\le & \|T_{\Omega ,b}(t,0)f\|_{p,\Omega}+\int_0^t \left\|T_{\Omega ,b}(t,s)\P_\Omega (u\cdot \nabla u)(s)\right\|_{p,\Omega}\,\d s+CT\\
&\le & C\|f\|_{p,\Omega}+C(\sup_{t\in [0,T]}t^\beta \|u(t)\|_{r,\Omega})(\sup_{t\in [0,T]}t^{\frac{1}{2}} \|\nabla u(t)\|_{p,\Omega})\int_0^t (t-s)^{-\frac{d}{2r}}s^{-\beta -\frac{1}{2}}\,\d s\\
& & \quad \quad \quad \quad +CT
\end{eqnarray*}
and hence $\sup_{t\in [0,T]} \|u(t)\|_{p,\Omega}<\infty$. The continuity can be obtained similarly. Thus, $u\in C([0,T],L^p_\sigma(\Omega))$.

The case $d=p=q$ can be deduced as in the case $d<p=q$ by taking $\beta=\frac{1-\delta}{2}$ for $\delta \in (0,1)$ and making the some computations but with a $q'>d$ instead of $q=d$, since if $q=d$ then the integral in (\ref{eq6.2}) could diverge.

The property (\ref{eq:mild_solution_prop3}) follows from the construction of the solution $u$ and
Lemma \ref{lem5.3}.
Finally the uniqueness follows as in \cite{Hansel:2009}.
\end{proof}
\section*{Acknowledgments}

Parts of the paper were written while the first author stayed at Waseda University in Tokyo. He would like to express his thank to Professor Y. Shibata for the kind hospitality and for fruitful discussions.

The first author acknowledges the financial support of the DFG International Research Training Group 1529 on \emph{Mathematical Fluid
Dynamics} at TU Darmstadt and of the University of Salerno where the paper originated.

The second author acknowledges the financial support of the M.I.U.R. research project
Prin 2008 ``Metodi deterministici e stocastici nello studio di problemi di evoluzione''.
%
%\begin{thebibliography}{99}
\bibliographystyle{alpha}
%\bibliography{references}

\begin{thebibliography}{GHH06b}

\bibitem{Bogovskii:1979}
M.E. Bogovski\u{\i}:
{\em Solution of the first boundary value problem for an equation of
  continuity of an incompressible medium},
Dokl. Akad. Nauk SSSR {\bf 248} (1979), 1037--1040.

\bibitem{Borcher:1992}
W. Borchers:
Zur Stabilit\"at und Faktorisierungsmethode f\"ur die
  Navier-Stokes-Gleichungen inkompressibler viskoser Fl\"ussigkeiten.
Habiliationschrift, Universit\"at Paderborn, 1992.

\bibitem{Farwig:2005}
R. Farwig:
{\em An $L^q$-analysis of viscous fluid flow past a rotating
  obstacle}, Tohoku Math. J. (2) {\bf 58} (2006),129--147.

\bibitem{Fujita/Kato:1964}
H. Fujita, T. Kato:
 {\em On the {N}avier-{S}tokes initial value problem. {I}}, Arch. Rational Mech. Anal. {\bf 16} (1964),269--315.

\bibitem{Galdi:1994}
G.P. Galdi:
An Introduction to the Mathematical Theory of the
  Navier-Stokes Equations. Vol. I, volume~38 of Springer Tracts
  in Natural Philosophy, Springer-Verlag, New York, 1994.

\bibitem{Geissert/Hansel:2010}
M. Geissert, T. Hansel:
{\em A non-autonomous model problem in $\R^n$ for
  Oseen-Navier-Stokes flow}, preprint 2010.

\bibitem{Geissert/Heck/Hieber:2006b}
M. Geissert, H. Heck, M. Hieber:
{\em On the equation ${\rm div}\,u=g$ and Bogovski\u\i's operator in
  Sobolev spaces of negative order},
  In Partial differential equations and functional analysis,
  volume 168 of Oper. Theory Adv. Appl., pages 113--121. Birkh\"auser,
  Basel, 2006.

\bibitem{Geissert/Heck/Hieber:2006a}
M. Geissert, H. Heck, M. Hieber:
{\em $L^p$-theory of the Navier-Stokes flow in the exterior of a
  moving or rotating obstacle}, J. Reine Angew. Math. {\bf 596} (2006),45--62.

\bibitem{Giga:1986}
Y. Giga:
{\em Solutions for semilinear parabolic equations in $L^p$ and
  regularity of weak solutions of the Navier-Stokes system},
J. Differential Equations {\bf 62} (1986), 186--212.

\bibitem{Grisvard}
P. Grisvard:
Elliptic Problems in Nonsmooth Domains. Pitman Advanced Publishing Program, Boston-London-Melbourne 1985.

\bibitem{Hansel:2009}
T. Hansel:
{\em On the Navier-Stokes equations with rotating effect and prescribed
  outflow velocity},
Journal of Mathematical Fluid Mechanics, online first.

\bibitem{Hansel/Rhandi:2010}
T. Hansel, A. Rhandi:
{\em Non-autonomous Ornstein-Uhlenbeck equations in exterior domains}, preprint 2010.

\bibitem{Hieber/Sawada:2005}
M. Hieber, O. Sawada:
{\em The Navier-Stokes equations in $\R^n$ with linearly
  growing initial data},
Arch. Ration. Mech. Anal. {\bf 175} (2005), 269--285.

\bibitem{Hishida:1999}
T. Hishida:
{\em An existence theorem for the Navier-Stokes flow in the exterior
  of a rotating obstacle},
Arch. Ration. Mech. Anal. {\bf 150} (1999), 307--348.

\bibitem{Hishida:2001}
T. Hishida:
{\em On the Navier-Stokes flow around a rigid body with a prescribed
  rotation},
in Proceedings of the Third World Congress of Nonlinear
  Analysis, Part 6 (Catania, 2000), volume~47, pages 4217--4231, 2001.

\bibitem{Hishida/Shibata:2009}
T. Hishida, Y. Shibata:
{\em $L_ p$-$L_q$ Estimate of the Stokes operator and
  Navier-Stokes flows in the exterior of a rotating obstacle},
Arch. Ration. Mech. Anal. {\bf193} (2009), 339--421.

\bibitem{Kato:1984}
T. Kato:
{\em Strong $L^p$-solutions of the Navier-Stokes equation in
  $\R^m$, with applications to weak solutions},
Math. Z. {\bf 187} (1984), 471--480.

\bibitem{Lunardi:1995}
A. Lunardi.
Analytic Semigroups and Optimal Regularity in Parabolic
  Problems.
  Progress in Nonlinear Differential Equations and their Applications,
  16. Birkh\"auser Verlag, Basel, 1995.

\bibitem{Noll/Saal:2003}
A. Noll, J. Saal:
{\em $H^\infty$-calculus for the Stokes operator on $L_q$-spaces},
Math. Z. {\bf 244} (2003), 651--688.

\bibitem{Shibata:2008}
Y. Shibata:
{\em On the Oseen semigroup with rotating effect},
in Functional Analysis and Evolution Equations, pages 595--611.
  Birkh\"auser, Basel, 2008.

\bibitem{Shibata/Shimada:2007}
Y. Shibata, R. Shimada:
{\em On a generalized resolvent estimate for the Stokes system with
  Robin boundary condition},
J. Math. Soc. Japan {\bf 59} (2007), 469--519.

\bibitem{Tanabe:1997}
H. Tanabe:
Functional Analytic Methods for Partial Differential Equations.
  Volume 204 of Monographs and Textbooks in Pure and Applied Mathematics.
Marcel Dekker Inc., New York, 1997.

\end{thebibliography}
%\begin{thebibliography}{GHHW05}

%
%
%
%\end{thebibliography}
%
\end{document}